\theoremstyle{plain}
\newtheorem{theorem}{Theorem}
\newtheorem{corollary}{Corollary}
\newtheorem{lemma}{Lemma}
\newtheorem{proposition}{Proposition}
\theoremstyle{definition}
\newtheorem{definition}{Definition}
\newtheorem{remark}{Remark}
\newtheorem{assumption}{Assumption}
\numberwithin{theorem}{section}
\numberwithin{corollary}{section}
\numberwithin{lemma}{section}
\numberwithin{definition}{section}
\numberwithin{example}{section}
\numberwithin{remark}{section}
\numberwithin{proposition}{section}
\numberwithin{assumption}{section}
\renewcommand{\leq}{\leqslant}
\renewcommand{\geq}{\geqslant}
\newcommand{\intl}{\int\limits}
\newcommand{\liml}{\lim\limits}
\newcommand{\suml}{\sum\limits}
\newcommand{\supl}{\sup\limits}
\newcommand{\prodl}{\prod\limits}
\newcommand{\cC}{\mathbb C}
\newcommand{\cR}{\mathbb R}
\newcommand{\cRd}{{{\mathbb R}^d}}
\newcommand{\Nat}{\mathbb N}
\newcommand{\PP}{\mathbb P}
\newcommand{\E}{\mathbb E}
\newcommand{\F}{\mathcal F}
\newcommand{\pd}{\partial}
\newcommand{\ffi}{\varphi}
\newcommand{\eps}{\varepsilon}
\newcommand{\sbs}{\subset}
\newcommand{\supp}{\mathop{\mathrm{supp}}\nolimits}
\newcommand{\id}{\mathop{\mathrm{Id}}\nolimits}
\newcommand{\Id}{\mathop{\mathrm{Id}}\nolimits}
\newcommand{\Dom}{\mathop{\mathrm{Dom}}\nolimits}
\newcommand{\Hess}{\mathop{\mathrm{Hess}}\nolimits}
\newcommand{\tr}{\mathop{\mathrm{tr}}\nolimits}
\newcommand{\diam}{\mathop{\mathrm{diam}}\nolimits}
\newcommand{\dist}{\mathop{\mathrm{dist}}\nolimits}
\begin{document}

\title[Approximations for killed  processes and   time-fractional equations]{Chernoff approximation for semigroups generated by killed Feller processes and Feynman formulae for time-fractional Fokker--Planck--Kolmogorov equations}

\author{ Yana A. Butko }
\date{\today}

\begin{abstract}
Semigroups, generated by Feller processes killed upon leaving a given domain, are considered. These semigroups correspond to  Cauchy--Di\-rich\-let type initial-exterior value problems in this domain for a class of evolution equations with (possibly non-local) operators. The considered semigroups are approximated by means of the Chernoff theorem.  For a class of killed Feller processes, the constructed Chernoff approximation converts into a Feynman formula, i.e. into a limit of $n$-fold iterated integrals of certain functions as $n\to\infty$. Representations of solutions of evolution equations by Feynman formulae can be used for direct calculations and simulation of underlying stochasstic processes. Further, a method to approximate solutions of time-fractional (including distributed order time-fractional) evolution equations is suggested. This method is based on connections between time-fractional and time-non-fractional evolution equations as well as on Chernoff approximations for the latter ones. Moreover, this method  leads to Feynman formulae for solutions of time-fractional evolution equations. To illustrate the method, a class of distributed order time-fractional diffusion equations is considered;  Feynman formulae  for solutions of the corresponding Cauchy and Cauchy--Dirichlet problems are obtained.

\bigskip

\textbf{Keywords:} Chernoff approximation, Feynman formula, approximation of operator semigroups,  approximation of transition probabilities, killed Feller processes, initial-exterior value problems, fractional diffusion, distributed order time-fractional equations, time-fractional Fokker--Planck--Kolmogorov equations.
\end{abstract}

\maketitle

\tableofcontents

\section{Introduction}
Classical results of Analysis and Stochastics provide  deep connections between such mathematical objects as operator semigroups, evolution equations, path integrals and Markov processes.  A strongly continuous operator semigroup $\left(T_t\right)_{t\geq0}$ with  a given generator $L$ on a given Banach space $X$ allows to solve an initial (or  initial-boundary) value problem  for the corresponding evolution equation $\frac{\partial f}{\partial t}=Lf$ with the initial value $f_0$ by the identity $f(t)=T_tf_0$. On the other hand, if $(\xi_t)_{t\geq0}$ is a Markov process with generator $L$, the semigroup $\left(T_t\right)_{t\geq0}$ defines  transition probability $P(t,x,dy)$ of this   process  by the formula  $T_tf_0(x)=\int f_0(y)P(t,x,dy)=\mathbb{E}^x[f_0(\xi_t)]$. The last expression gives a stochastic representation for the solution of the above mentioned  evolution equation in $X$. A basic example is a $d$-dimensional Brownian motion  whose transitional probability $P(t,x,dy)$ is given by the Gaussian exponent  $P(t,x,dy):=(2\pi t)^{-d/2}\exp\left(   -\frac{|x-y|^2}{2t}\right)dy$, and  the corresponding evolution equation is the heat equation $\frac{\partial f}{\partial t}=\frac12\Delta f$ with the Laplace-operator $\Delta$. 

Usually, however, the semigroup $\left(T_t\right)_{t\geq0}$ or, respectively, the transition probability $P(t,x,dy)$ is not known explicitly. One of  analytical approaches   to  deal with unknown semigroups / transitional probabilities is  the use of  the Chernoff Theorem (which is a very broad generalization of the classical Daletskii--Lie--Trotter formula). This theorem provides conditions for a family (just a family, not a semigroup!) of bounded linear operators $(F(t))_{t\geq0}$ to  approximate the  semigroup $(T_t)_{t\geq0}$ with a given generator $L$  via the formula $T_t=\lim\limits_{n\to\infty}\left[  F(t/n) \right]^n$. This formula is called  \emph{Chernoff approximation of the semigroup $\left(T_t\right)_{t\geq0}$ by the family $(F(t))_{t\geq0}$}.
 The method of Chernoff approximation has the following advantage: if the family $(F(t))_{t\geq0}$ is given explicitly,  the expressions $\left[  F(t/n) \right]^n$ can be directly used for  calculations and hence for approximation of solutions of corresponding evolution equations, for computer modelling of considered dynamics, for approximation of transition probabilities of underlying Markov processes and hence for simulation of these processes.  Moreover, if all operators $F(t)$ are integral operators with elementary kernels (or pseudo-differential operators with elementary symbols), the identity  $T_t=\lim\limits_{n\to\infty}\left[  F(t/n) \right]^n$ leads to  representation of the semigroup $(T_t)_{t\geq0}$ by  limits of  $n$-fold iterated  integrals of  elementary functions when $n$ tends to infinity.  Such representations are called \emph{Feynman formulae}. Feynman formulae have an additional advantage. Namely, the limits in Feynman formulae   usually coincide with   functional (or path) integrals with respect to probability measures (i.e., with Feynman-Kac formulae providing stochastic representations for solutions of the corresponding evolution equations) or with respect to Feynman pseudomeasures (i.e., with Feynman path integrals).  Therefore,   representations of evolution semigroups by Feynman formulae  allow to establish new Feynman-Kac formulae and  give an additional tool to calculate path  integrals numerically. Note that different Feynman formulae for the same semigroup allow to establish  relations between different path integrals. This leads, in particular, to some  
 ``change-of-variables'' rules, connecting certain Feynman--Kac formulae and Feynman path integrals  (see the papers \cite{MR3455669,MR2863557}). 
 Chernoff approximation can be understood in some particular cases also as a construction of Markov chains approximating a given Markov process \cite{MR2502474} and as a numerical path integration method for solving corresponding PDEs/SDEs  \cite{Jakobsen}.

 It is worth to mention that the method of Chernoff approximation has a wide range of applications.  For example, this method has been used to investigate  {Schr\"{o}dinger type evolution equations}  in \cite{Remizov-JFA,MR2965550,MR2766564,MR2547389,MR2477291,MR2127488,MR2042203,MR1835462};  {stochastic Schr\"{o}dinger type equations} have been studied in \cite{MR3527030,MR2314121,MR2157588,MR2190085}.  
{Second order parabolic equations related to diffusions in different geometrical structures} (e.g., in  Eucliean spaces and their subdomains, Riemannian ma\-ni\-folds and  their subdomains, metric graphs, Hilbert spaces) have been studied, e.g.,  in \cite{MR3455669,MR3381153,MR3496753,technomag,MR3096384,MR3154319,MR2965551,MR2863557,MR2810757,MR2953994,MR2729591,
MR2536519,MR2483984,MR2423533,MR2359387,MR2449989,MR2314128,MR2133965,MR2114870,MR1991497}. 
{Evolution equations with non-local operators} generating some Markov processes in $\cRd$  have been considered in \cite{ChApprSubSem,MR3455669,MR2999096,MR2759261}. 
{Evolution equations with the Vladimirov operator} (this operator  is a $p$-adic analogue of the Laplace operator)  have been investigated in  \cite{MR2963683,MR2865952,MR2681357,MR2599557,MR2462100}. 
 Feynman formulae as a method of averaging of random Hamiltonians have been discussed in \cite{MR3588817,SakSmoOrl2014}. In the present note, the method of Chernoff approximation is applied to investigation of  initial-exterior value problems for evolution equations with non-local operators in a given subdomain $G$ of $\cRd$. Such problems correspond to  governing equations for Feller processes, killed upon leaving the domain $G$. For  a special class of killed Feller processes,   solutions of the corresponding initial-exterior problems are represented by Feynman formulae.

Fractional derivatives are natural extensions of their integer-order analogues  (see, e.g., \cite{MR1219954,MR1347689}). Evolution equations with partial derivatives of fractional order (fractional evolution equations), in particular, time- (and, possibly, space-) fractional diffusion equations (modelling anomalous diffusion)  have  been  applied to problems in physics, chemistry, biology, medicine, finance, hydrology and  other areas (see, e.g.,  \cite{MR2884383,MR1809268,MR2090004,MR1219954,MR1347689} and references therein). 
Many time-fractional evolution equations serve as governing  equations for stochastic processes. However, the processes, whose marginal density function  evolves in time according to a given time-fractional evolution equation, are usually non-Markovian (and hence, there is no semigroup structure behind the equation)  and are non-uniquely  defined by this marginal density function  (therefore, very different stochastic representations for a solution of a given fractional evolution equation are possible, see, e.g., \cite{MR3413862,MR2782245,MR2588003}).  The absence of the semigroup property for solutions of time-fractional evolution equations does not allow to apply the method of Chernoff approximation for such equations directly. Nevertheless,   several relations exist between time-fractional and ''standard'' (time-non-fractional) evolution equations: via a kind of subordination (see, e.g.,  \cite{MR2964432, MR1604710,MR1809268,MR1874479,MR2588003}) and via higher order operators (see, e.g., \cite{MR2027298,MR2489164,MR2491905,MR2965747,MR3400947}). These relations allow to construct some approximations for solutions of such time-fractional evolution equations  via Chernoff approximations for solutions of some related ''standard'' evolution equations.  In this note,  we present   approximations for solutions of (a class of)  time-fractional evolution equations using their connection to time-non-fractional equations via a kind of subordination.
In particular,  Feynman formulae are obtained for solutions of Cauchy and Cauchy--Dirichlet problems for distributed order time-fractional diffusion equations.

\section{Notations and preliminaries}

\subsection{Chernoff theorem and Feynman formulae}
In the sequel, the following version of the Chernoff theorem is used (cf. \cite{MR0231238,MR0417851}).
\begin{theorem}\label{0:thm:Chernoff-my}
Let $(F(t))_{t\geq0}$ be a family of bounded linear operators on a Banach space $X$. Assume that
\begin{enumerate}
\item[(i)]  $F(0)=\id$,

\item[(ii)] $\|F(t)\|\leq  e^{wt}  $ for some   $w\in\cR$  and all $t\geq0$,

\item[(iii)] the limit $L\ffi:=\liml_{t\to0}\frac{F(t)\ffi-\ffi}{t}$ exists for all $\ffi\in D$, where $D$ is a dense subspace in $X$ such that $(L,D)$ is closable and the closure $(L,\Dom(L))$ of $(L,D)$ generates a strongly continuous semigroup $(T_t)_{t\geq0}$.
\end{enumerate}
 Then the semigroup $(T_t)_{t\geq0}$ is  given by
\begin{equation}\label{0:eq:ChernoffFormula}
T_t\ffi=\liml_{n\to\infty}[F(t/n)]^n\ffi
\end{equation}
for all $\ffi\in X$, and the convergence is locally uniform with respect to  $t\geq0$. 
\end{theorem}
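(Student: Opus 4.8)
The plan is to follow the classical route to the Chernoff product formula: reduce the growth bound (ii) to the contraction case, isolate a sharp elementary estimate comparing the $n$-th power $[F(t/n)]^n$ with an exponential, and finally invoke a semigroup-convergence theorem to identify the limit as $T_t$. First I would reduce to contractions. Setting $\tilde F(t):=e^{-wt}F(t)$ gives $\|\tilde F(t)\|\leq 1$ and $\tilde F(0)=\id$, while the limit in (iii) passes from $L$ to $L-w\,\id$, which generates $\tilde T_t=e^{-wt}T_t$. Since $[\tilde F(t/n)]^n=e^{-wt}[F(t/n)]^n$, establishing the formula for $\tilde F$ yields it for $F$, so from here on I assume $\|F(t)\|\leq 1$ for all $t$.

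Next comes the heart of the matter. Fix $t>0$, write $C:=F(t/n)$, and introduce the bounded operators $B_n:=\tfrac{n}{t}\bigl(F(t/n)-\id\bigr)$, so that $[F(t/n)]^n=C^n$ and $e^{tB_n}=e^{n(C-\id)}$. I would prove the elementary bound
\begin{equation*}
\bigl\|C^n\varphi-e^{n(C-\id)}\varphi\bigr\|\leq\sqrt{n}\,\bigl\|(C-\id)\varphi\bigr\|
\end{equation*}
by expanding $e^{n(C-\id)}=e^{-n}\sum_{k\geq0}\tfrac{n^k}{k!}C^k$, using the telescoping estimate $\|(C^k-C^n)\varphi\|\leq|k-n|\,\|(C-\id)\varphi\|$ (valid since $C$ is a contraction), and then Cauchy--Schwarz together with the identity $e^{-n}\sum_k\tfrac{n^k}{k!}(k-n)^2=n$, the variance of a Poisson law. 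For $\varphi\in D$ the right-hand side equals $\tfrac{t}{\sqrt n}\,\|B_n\varphi\|$, and since $B_n\varphi\to L\varphi$ by (iii) this is $O(n^{-1/2})\to0$, uniformly for $t$ in compact sets. Thus $[F(t/n)]^n\varphi$ and $e^{tB_n}\varphi$ share the same limit on $D$.

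It then remains to show $e^{tB_n}\varphi\to T_t\varphi$. A direct computation gives $e^{tB_n}=e^{-n}e^{nF(t/n)}$ with $\|e^{nF(t/n)}\|\leq e^{n\|F(t/n)\|}\leq e^n$, so the $e^{tB_n}$ are contraction semigroups generated by the bounded operators $B_n$, with $B_n\varphi\to L\varphi$ on $D$. The Trotter--Kato approximation theorem then yields $e^{tB_n}\varphi\to T_t\varphi$ for all $\varphi\in X$, locally uniformly in $t$. Combined with the estimate above, this proves the formula on $D$; a standard three-$\eps$ argument, using density of $D$ and the uniform bound $\|[F(t/n)]^n\|\leq1$, extends it to all $\varphi\in X$ with locally uniform convergence.

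The main obstacle, beyond the bookkeeping, is this last step: upgrading the convergence $B_n\varphi\to L\varphi$ from the core $D$ to strong convergence of the semigroups. The natural identity $e^{tB_n}\varphi-T_t\varphi=\int_0^t e^{(t-s)B_n}(B_n-L)T_s\varphi\,ds$ requires generator convergence on the range of $T_s$, not merely on $D$; bridging this gap is exactly where the hypotheses that $D$ be a core for $L$ and that the $e^{tB_n}$ be uniformly contractive (equivalently, resolvent convergence) become essential. Everything else is either the clean Poisson/Cauchy--Schwarz computation or routine density arguments.
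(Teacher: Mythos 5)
The paper does not prove this statement---it quotes it as the classical Chernoff theorem with a reference to Chernoff's original papers---so there is no in-paper argument to compare against. Your proposal is the standard proof (reduction to contractions, the $\sqrt{n}$-estimate $\|C^n\varphi-e^{n(C-\id)}\varphi\|\leq\sqrt{n}\,\|(C-\id)\varphi\|$ via the Poisson variance, and the Trotter--Kato approximation theorem applied on the core $D$) and is correct; the only detail worth making explicit is that, after the rescaling, the closure of $(L-w,D)$ is dissipative as a strong limit of the dissipative operators $B_n$ and hence, by Lumer--Phillips, generates a contraction semigroup, which is exactly what the core version of Trotter--Kato requires.
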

The formula \eqref{0:eq:ChernoffFormula} is  called \emph{Chernoff approximation of the semigroup $(T_t)_{t\geq0}$ by the family $(F(t))_{t\geq0}$}. Any family $(F(t))_{t\geq0}$, satisfying the assumptions (i)--(iii) of the Chernoff theorem~\ref{0:thm:Chernoff-my} with respect to a given semigroup $(T_t)_{t\geq0}$, is   called \emph{Chernoff equivalent} to this semigroup.
\begin{definition}
A \emph{Feynman formula} is a representation of a solution of an initial (or initial-boundary) value problem for an evolution equation (or, equivalently, a representation of the semigroup solving the problem) by a limit of $n$-fold iterated integrals of some functions as $n\to\infty$.
\end{definition}

One should not confuse the notions of Chernoff approximation and Feynman formula.
On the one hand, not all Chernoff approximations can be directly interpreted as Feynman formulae since, generally, the operators $(F(t))_{t\geq0}$ do not have to be neither integral operators, nor pseudo-differential operators.  On the other hand,   representations of solutions of evolution equations in the form of Feynman formulae can be obtained by different methods, not necessarily via the Chernoff Theorem. And such Feynman formulae may have no relations to any  Chernoff approximation, or their relations may be quite indirect (see, e.g., Feynman formulae~\eqref{eq:FF:CDP-nonLoc}, \eqref{FF:fracCP}, \eqref{FF:fracCDP} below).

\subsection{Killed Feller processes and their generators}
\label{Section:CDP:1}

In this Subsection, we follow the exposition of~\cite{MR3156646} and \cite{BLM}.  Let $Q$ be a locally compact separable metric space. Let $C_0(Q)$  be the  set  of all bounded continuous functions $\ffi$ on $Q$ such that   $\forall\,\eps>0\,\,\exists\,\text{ a compact }\, K_\ffi^\eps\subset Q\,\text{ with }\, |\ffi(q)|<\eps\,\,\forall\,q\notin K_\ffi^\eps$. The set $X:=C_0(Q)$ is a  Banach space  endowed with the supremum-norm.
 A semigroup of bounded linear operators $(T_t)_{t\geq0}$ on the Banach space $X$ is called \emph{Feller semigroup} if it is a strongly continuous semigroup, it is \emph{positivity preserving} (i.e. $T_t\ffi\geq0$ for all $\ffi\in X$ with $\ffi\geq0$) and it is \emph{sub-Markovian} (i.e. $T_t\ffi\leq1$ for all $\ffi\in X$ with $\ffi\leq1$).  Its generator (called \emph{Feller generator}) is defined by 
$\Dom(L):=\left\{ \ffi\in X \,\big|\,\,\lim_{t\to0}\frac{T_t\ffi-\ffi}{t}\quad \text{ exists in }  X    \right\}$,
$L\ffi:=\lim_{t\to0}\frac{T_t\ffi-\ffi}{t}\quad\forall\ffi\in\Dom(L)$.
Let $(\Omega, \mathcal{F}, \mathbb{P})$ be a probability space with a filtration $(\F_t)_{t\geq0}$, and let  $(\xi_t)_{t\geq0}$ be a temporally homogeneous Markov process with state space $(Q,\mathcal{B}(Q))$, where $\mathcal{B}(Q)$ is the Borel sigma-algebra of $Q$. The process $(\xi_t)_{t\geq0}$ is called \emph{Feller process} if its transition semigroup $(T_t)_{t\geq0}$,
$
T_t\ffi(x):=\int_Q \ffi(y) \mathbb{P}^x(\xi_t\in dy)\equiv\mathbb{E}^x[\ffi(\xi_t)],
$
 is a Feller semigroup.   
Using a version of the Riesz representation theorem, one can extend a Feller  semigroup $(T_t)_{t\geq0}$ to a sub-Markovian semigroup on the space   $B_b(Q)$ of bounded Borel functions. 
A sub-Markovian semigroup $(T_t)_{t\geq0}$ (on $B_b(Q)$) is called \emph{strong Feller semigroup} if $T_t\,:\,B_b(Q)\to C_b(Q)$ for all $t>0$.
Note that a strong Feller semigroup need not  be Feller and vice versa. 
 Some conditions on Feller semigroups to be strong Feller can be found in Lemma~1.12, Theorem~1.14 and Theorem~1.15 of \cite{MR3156646}.
The  process $(\xi_t)_{t\geq0}$ (resp., the  semigroup $(T_t)_{t\geq0}$ with $T_t\ffi(x):=\mathbb{E}^x[\ffi(\xi_t)]$) is called \emph{doubly Feller} if it is both Feller and strong Feller.

Let $Q=\cRd$ and $X=C_0(\cRd)$. Let  $(\xi_t)_{t\geq0}$ be a Feller process on $\cRd$, $(T_t)_{t\geq0}$ be the corresponding Feller semigroup  and $(L,\Dom(L))$ be its Feller generator. We define the \emph{pointwise generator} $(L_p,\Dom(L_p))$ of $(T_t)_{t\geq0}$ by
\begin{align}\label{CDP:1:eq:L_p}
&\Dom(L_p):=\left\{\ffi\in X\,\,\big|\,\, \exists\,g\in X\,:\,\liml_{t\to0}\frac{T_t\ffi(x)-\ffi(x)}{t}=g(x)\,\,\forall\,x\in\cRd  \right\},\nonumber\\
&
L_p\ffi(x):=\liml_{t\to0}\frac{T_t\ffi(x)-\ffi(x)}{t}=g(x)\quad\forall\,\ffi\in\Dom(L_p),\,\,x\in\cRd.
\end{align}
Then $(L,\Dom(L))=(L_p,\Dom(L_p))$ by Theorem~1.33 of \cite{MR3156646}. Moreover,  if\footnote{The assumption $C^\infty_c(\cRd)\subset\Dom(L)$ is quite standard and holds in many cases, see, e.g., \cite{MR3156646}.} $C^\infty_c(\cRd)\subset\Dom(L)$,  then  we have also  $C^2_0(\cRd):=\{\ffi\in C^2(\cRd)\,:\,\pd^\alpha\ffi\in C_0(\cRd),\,|\alpha|\leq2 \}\subset\Dom(L)$. And  $L\ffi(x)$ is given for each $\ffi\in C^2_0(\cRd)$ and each $x\in\cRd$  by the folowing formula:
\begin{equation}\label{2:eq:g35} 
\begin{split}
    L\ffi(x)
    &=
    -C(x)\ffi(x) - B(x)\cdot\nabla \ffi(x)
    + \tr(A(x)\Hess\ffi(x))\\
    &\phantom{==}+ \int_{y\neq 0} \left( \ffi(x+y) - \ffi(x)
    - \frac{y\cdot\nabla \ffi(x)}{1+|y|^2}\right)\,N(x,dy),
\end{split}
\end{equation}
where  $\Hess\ffi$ is the Hessian matrix of  second order partial derivatives of $\ffi$; as well as  $C(x)\geq0$,  $B(x)\in\cRd$, $A(x)\in\mathbb{R}^{d\times d}$ is a symmetric positive semidefinite matrix    and $N(x,\cdot)$ is a Radon measure  on $\cRd\setminus \{0\}$ with  $\int_{y\neq 0}
|y|^2(1+|y|^2)^{-1}\,N(x,dy)<\infty$  for each $x\in\cRd$.
Therefore,  $L$ is an integro-differential operator on $C^2_0(\cRd)$. And this operator is non-local if $N\neq0$. 
This class of generators  $L$ includes, in particular,  fractional Laplacians $L=-(-\Delta)^{\alpha/2}$ and relativistic Hamiltonians $\sqrt[\alpha]{(-\Delta)^{\alpha/2}+m(x)}$, $\alpha\in(0,2)$, $m>0$.

Let now  $G\subset\cRd$ be a bounded domain (connected open set) and let $Y:=C_0(G)$ be the set of all continuous 
 functions on $G$ that tend to zero as $x\in G$ approaches the boundary $\pd G$.  Then $Y$ is a Banach space with the supremum norm $\|\cdot\|_Y$, $\|\ffi\|_Y:=\sup_{x\in G}|\ffi(x)|$.  For a Feller process $(\xi_t)_{t\geq0}$ on $\cRd$, we define the first exit time from $G$ 
 by
$$
\tau_G:=\inf\{t>0\,:\,\xi_t\notin G\}.
$$
Let $(\xi_t^o)_{t\geq0}$ denote the killed process on $G$, i.e.,
$$
\xi_t^o=\left\{
\begin{array}{ll}
\xi_t, & t<\tau_G,\\
\pd, & t\geq\tau_G,
\end{array}\right.
$$
where $\pd$ denotes a cemetery point (i.e. an isolated point $\pd\notin G$ added to the state space $G$). We say that a boundary point $x\in\pd G$ is \emph{regular} for $G$ if $\mathbb{P}^x(\tau_G=0)=1$. We say that $G$ is \emph{regular} if every point $x\in\pd G$ is regular.\footnote{Due to Theorem~2.2. of \cite{MR1473631}, if  a boundary point $x\in\pd G$ satisfies the external cone condition, then it is regular for the case when $(\xi_t)_{t\geq0}$ is  symmetric $\alpha$-stable. In particular, any Lipschitz domain is regular in this case.  }  Let  $(\xi_t)_{t\geq0}$ be a doubly Feller process on $\cRd$ and $G$ be regular. Then $(T^o_t)_{t\geq0}$, such that
$$
T^o_t\ffi(x):=\mathbb{E}^x\left[\ffi\left(\xi^o_t\right)\right],\quad \ffi\in Y,\,\, x\in G,
$$
is a Feller semigroup on $Y$ (cf. Lemma~2.2 in \cite{BLM}). Let $(L_o,\Dom(L_o))$ be the Feller generator of $(T^o_t)_{t\geq0}$ on $Y$. The operator $(L_o,\Dom(L_o))$ is described in Proposition~\ref{CDP:1:prop:BLM} below (cf. Thm.~2.3 and Lemma~2.6 in~\cite{BLM}).  Note that each element $\ffi\in Y$ can be extended by zero outside $G$; this extension is again denoted by $\ffi$ and belongs to the space $X$.

\begin{proposition}\label{CDP:1:prop:BLM}
Let  $(\xi_t)_{t\geq0}$ be a doubly Feller process on $\cRd$, $(T_t)_{t\geq0}$ be the corresponding Feller semigroup and $(L,\Dom(L))$ be its Feller generator. Let  $G\subset\cRd$ be a bounded regular domain. The generator of the killed Feller process  $(\xi_t^o)_{t\geq0}$ is characterized as follows
\begin{enumerate}
\item[(i)] $\Dom(L_o)=\{\ffi\in Y\,\,:\,\, L_p\ffi\in Y\}$, where  $L_p$ is given by~\eqref{CDP:1:eq:L_p},  $L_p\ffi\in Y$ means that $L_p$  is applied to the zero extension of $\ffi$ on $\cRd$, $L_p\ffi(x)$ exists for each $x\in G$ and the function $[x\mapsto L_p\ffi(x)]$ belongs to $Y$. Moreover, it holds that  $L_o\ffi(x)=L_p\ffi(x)$ for all $\ffi\in\Dom(L_o)$ and the limit in~\eqref{CDP:1:eq:L_p}
exists 
locally uniformly on $G$ (i.e., uniformly with respect to $x\in K$ for each compact $K\subset G$).

\item[(ii)] Assume that $C^\infty_c(\cRd)\subset\Dom(L)$.  Then, for each $\ffi\in C_0(\cRd)\cap C^2(G)$, we have
$$
L_p\ffi(x)=L\ffi(x),\quad\forall \, x\in G,
$$
where $L$ is the integro-differential operator given by the formula~\eqref{2:eq:g35}.
\end{enumerate}
\end{proposition}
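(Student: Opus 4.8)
The plan is to reduce the whole statement to the fundamental identity
\begin{equation}\label{key-id}
T^o_t\varphi(x)=T_t\varphi(x)-\mathbb{E}^x\!\left[\varphi(\xi_t)\,\1_{\{\tau_G\leq t\}}\right],\qquad x\in G,
\end{equation}
valid for every $\varphi\in Y$ extended by zero to $\cRd$ (so that $\varphi\in C_0(\cRd)$ with support in $\overline G$, since $\varphi$ vanishes on $\partial G$ and $G$ is bounded). This follows immediately from the definition of $\xi^o$ and $\varphi(\partial)=0$, which give $T^o_t\varphi(x)=\mathbb{E}^x[\varphi(\xi_t)\1_{\{t<\tau_G\}}]$. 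Dividing \eqref{key-id} by $t$, the first term on the right produces $L_p\varphi(x)$ in the limit by the very definition \eqref{CDP:1:eq:L_p}, so everything hinges on controlling the error term $\tfrac1t\mathbb{E}^x[\varphi(\xi_t)\1_{\{\tau_G\leq t\}}]$.

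The crux is the following estimate, which I regard as the main obstacle: for every compact $K\subset G$ and every $\varphi\in Y$,
\begin{equation}\label{error-zero}
\lim_{t\to0}\frac1t\,\sup_{x\in K}\big|\mathbb{E}^x[\varphi(\xi_t)\1_{\{\tau_G\leq t\}}]\big|=0.
\end{equation}
The difficulty is that for a process with jumps one only has $\mathbb{P}^x(\tau_G\leq t)=O(t)$, so the crude bound $\|\varphi\|_Y\,\mathbb{P}^x(\tau_G\leq t)$ does not decay faster than $t$; the extra smallness must come from demanding that the path \emph{both} leave $G$ \emph{and} lie in $G$ at the later time $t$ (recall $\varphi(\xi_t)\neq0$ forces $\xi_t\in G$). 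I would fix $\eps>0$ and use $|\varphi(\xi_t)|\leq\eps+\|\varphi\|_Y\1_{\{\xi_t\in S_\eps\}}$ with $S_\eps:=\{|\varphi|>\eps\}$, a compact subset of $G$ at distance $\delta_\eps:=\dist(S_\eps,\partial G)>0$. The $\eps$-piece contributes at most $\eps\,\tfrac1t\mathbb{P}^x(\tau_G\leq t)\leq \eps\,C_K$, using the standard maximal inequality $\mathbb{P}^z(\sup_{s\leq t}|\xi_s-z|\geq r)\leq C(r)\,t$ for doubly Feller processes (whose symbol is locally bounded), which also yields $\mathbb{P}^x(\tau_G\leq t)\leq C_K t$ on $K$. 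For the $S_\eps$-piece I apply the strong Markov property at $\tau_G$: on $\{\tau_G\leq t\}$ we have $\xi_{\tau_G}\notin G$, so reaching $S_\eps$ at time $t$ forces a post-exit displacement of at least $\delta_\eps$, whence $\mathbb{P}^{\xi_{\tau_G}}(\xi_{t-\tau_G}\in S_\eps)\leq C'(\delta_\eps)\,t$ (far exits require an even larger displacement and contribute still less); this gives $\mathbb{P}^x(\tau_G\leq t,\xi_t\in S_\eps)=O(t^2)$. Letting first $t\to0$ and then $\eps\to0$ yields \eqref{error-zero}.

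For part~(i), granting \eqref{error-zero}, take $\varphi\in\Dom(L_o)$: then $(T^o_t\varphi-\varphi)/t\to L_o\varphi$ in $\|\cdot\|_Y$, and by \eqref{key-id}--\eqref{error-zero} the quotient $(T_t\varphi-\varphi)/t$ converges to the same limit locally uniformly on $G$; hence $L_p\varphi=L_o\varphi\in Y$, proving $\Dom(L_o)\subseteq\{\varphi\in Y:L_p\varphi\in Y\}$ together with the identity $L_o\varphi=L_p\varphi$ and the locally uniform convergence. For the reverse inclusion I use that $(L_o,\Dom(L_o))$ generates the Feller semigroup $(T^o_t)$, so $\lambda-L_o$ is a bijection of $\Dom(L_o)$ onto $Y$ for large $\lambda$. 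Given $\varphi\in Y$ with $L_p\varphi\in Y$, choose $\tilde\varphi\in\Dom(L_o)$ with $(\lambda-L_o)\tilde\varphi=(\lambda-L_p)\varphi$; since $L_o\tilde\varphi=L_p\tilde\varphi$ by the first inclusion, $u:=\varphi-\tilde\varphi\in Y$ satisfies $L_p u=\lambda u$. As $L_p$ inherits the positive maximum principle from the Feller generator, a positive maximum of the zero extension of $u$, necessarily attained at an interior point $x_0\in G$, would give $0\geq L_p u(x_0)=\lambda u(x_0)>0$; hence $u\equiv0$ and $\varphi=\tilde\varphi\in\Dom(L_o)$.

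For part~(ii) I localise. Fix $x\in G$, choose $\chi\in C^\infty_c(G)$ with $\chi\equiv1$ on a neighbourhood $U\ni x$, and write $\varphi=\chi\varphi+(1-\chi)\varphi$. Under the assumption $C^\infty_c(\cRd)\subset\Dom(L)$ (hence $C^2_0(\cRd)\subset\Dom(L)$), the function $\chi\varphi\in C^2_c(\cRd)$ lies in $\Dom(L)=\Dom(L_p)$, so $L_p(\chi\varphi)(x)=L(\chi\varphi)(x)$ is given by \eqref{2:eq:g35}. The remainder $u:=(1-\chi)\varphi$ vanishes on $U$, so $u(x)=0$, $\nabla u(x)=0$, $\Hess u(x)=0$; for such $u$ the pointwise generator reduces to the pure jump part, $L_p u(x)=\int_{y\neq0}u(x+y)\,N(x,dy)$, which follows from $\tfrac1t\mathbb{E}^x[u(\xi_t)]\to\int u(x+y)N(x,dy)$, the standard identification of the Lévy kernel of a Feller process on functions vanishing near $x$. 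Adding the two contributions, the cut-off disappears: the local coefficients only see the values at $x$, where $\chi=1$, and the two jump integrals recombine into $\int_{y\neq0}\big(\varphi(x+y)-\varphi(x)-\tfrac{y\cdot\nabla\varphi(x)}{1+|y|^2}\big)N(x,dy)$, producing exactly $L\varphi(x)$ of \eqref{2:eq:g35}.
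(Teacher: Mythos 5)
You should know at the outset that the paper contains no proof of this proposition: it is stated with a pointer to Thm.~2.3 and Lemma~2.6 of the cited preprint of Baeumer, Luks and Meerschaert, so there is no in-text argument to compare yours with line by line. Taken on its own terms, your architecture is reasonable and several pieces are fully correct: the identity $T^o_t\ffi(x)=T_t\ffi(x)-\E^x\big[\ffi(\xi_t)\1_{\{\tau_G\leq t\}}\big]$ for the zero extension of $\ffi\in Y$, and the reduction of part~(i) to the vanishing of $t^{-1}\sup_{x\in K}\E^x\big[|\ffi(\xi_t)|\1_{\{\tau_G\leq t\}}\big]$ on compacts $K\subset G$; the reverse inclusion via surjectivity of $\lambda-L_o$ combined with the positive maximum principle for $L_p$ (this part is clean and complete once the forward inclusion is granted); and the localization in part~(ii), which legitimately exploits the hypothesis $C^\infty_c(\cRd)\subset\Dom(L)$, both to place $\chi\ffi$ in $\Dom(L)$ and to identify $L_p$ on functions vanishing near $x$ with integration against the L\'evy kernel $N(x,\cdot)$.

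The genuine gap is in your key estimate. Both of its ingredients --- $\PP^x(\tau_G\leq t)\leq C_K\,t$ uniformly on a compact $K\subset G$, and $\PP^{z}(|\xi_u-z|\geq\delta_\eps)\leq C(\delta_\eps)\,u$ for the post-exit step --- are instances of the maximal inequality $\PP^z(\sup_{s\leq u}|\xi_s-z|\geq r)\leq c\,u\,\sup_{|y-z|\leq r}\sup_{|\xi|\leq 1/r}|q(y,\xi)|$, which is a theorem about \emph{rich} Feller processes, i.e.\ those with $C^\infty_c(\cRd)\subset\Dom(L)$ and locally bounded symbol $q$. Part~(i) of the proposition assumes only that $(\xi_t)_{t\geq0}$ is doubly Feller; richness is hypothesized only in part~(ii). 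Worse, in the post-exit step the starting point $z=\xi_{\tau_G}$ ranges over the \emph{unbounded} set $\cRd\setminus G$, so you need $C(\delta_\eps)$ uniform in $z$ there; your parenthetical ``far exits require an even larger displacement and contribute still less'' tacitly assumes the bound improves as $z$ recedes, which fails unless the symbol is globally bounded (coefficients growing at infinity break it). As written, therefore, your argument proves the proposition for rich Feller processes with bounded symbol --- which covers every application made of it in this paper, but not the statement in the generality in which it is posed. To close the gap you would need either to add these hypotheses explicitly or to reach the $o(t)$ estimate by a route that does not rely on quantitative exit bounds, e.g.\ through Dynkin's formula and the killed resolvent $R^o_\lambda f(x)=\E^x\big[\int_0^{\tau_G}e^{-\lambda s}f(\xi_s)\,ds\big]$.
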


\begin{remark}\label{rem:ACP-CDP}
(i) The abstract Cauchy problem in $Y$ for the  evolution equation $\frac{d f}{dt}=L_o f$ with an initial condition $f_0\in\Dom(L_o)$ can be interpreted as  
the following Cauchy--Dirichlet type initial--exterior value problem\footnote{Such problems are discussed, e.g., in \cite{MR3318251}, see also \cite{MR1383011} for the stationary case.}: 
\begin{align}\label{CDP:1:eq:IEP}
&\frac{\partial f}{\partial t}(t,x)=Lf(t,x),\quad t>0,\, x\in G,\nonumber\\
&f(0,x)=f_0(x),\quad x\in G,\\ 
&f(t,x)=0,\quad t>0,\,\quad x\in \cRd\setminus G.\nonumber 
\end{align}
And the function $f(t,x):=T^o_tf_0(x)$, extended by zero outside $G$, solves the problem for each $f_0\in\Dom(L_o)$ (e.g., by Theorem~4.1.3 in \cite{MR710486}).

(ii) Let, additionally,  the operator $(L,\Dom(L))$ in $X$ be a \emph{local operator outside $\overline{G}:=G\cup\pd G$}, i.e. for each $x\in\cRd\setminus\overline{G}$  and each $\ffi_1$, $\ffi_2\in\Dom(L)$ such that $\ffi_1$ and $\ffi_2$ coincide on $\overline{G}$ and on some neighbourhood of $x$, one has $L\ffi_1(x)=L\ffi_2(x)$. 
For example, consider the integro-differential operator $L$ given by~\eqref{2:eq:g35} with $N(x,dy)$  such that 
\begin{align}\label{CDP:2:eq:CensoredN}
N(x,dy)=\left\{
\begin{array}{ll}
1_{0<|y|\leq\dist(x,\pd G)}(y)N(x,dy), & x\in G,\\
0, & x\in\cRd\setminus G.
\end{array}
\right.
\end{align}
The integral part of such operator $L$ gives rise to the so-called \emph{censored processes} in $G$, cf.~\cite{MR2006232,MR3461088}.
If $L$ is local outside $\overline{G}$, the abstract Cauchy problem in $Y$ for the  evolution equation $\frac{d f}{dt}=L_o f$ can be interpreted as the following Cauchy--Dirichlet problem: 
\begin{align}\label{CDP:1:eq:CDP}
&\frac{\partial f}{\partial t}(t,x)=Lf(t,x),\quad t>0,\, x\in G,\nonumber\\
&f(0,x)=f_0(x),\quad x\in G,\\ 
&f(t,x)=0,\quad t>0,\,\quad x\in \partial G.\nonumber 
\end{align}
And again the function $f(t,x):=T^o_tf_0(x)$ solves  this problem for each $f_0\in\Dom(L_o)$.


\end{remark}

\subsection{Distributed-order fractional derivatives}
There exist many different notions of fractional derivatives. We discuss only two versions of them. One defines the \emph{Caputo} (or \emph{Caputo-Dzhrbashyan}) \emph{fractional derivative} of  order $\beta$, $\beta\in(0,1)$, for a (sufficiently good) function $u$ by
\begin{align*}
\frac{\pd^\beta}{\pd t^\beta} u(t):=\frac{1}{\Gamma(1-\beta)}\intl_0^t \frac{u'(r)}{(t-r)^\beta}dr,
\end{align*} 
where $\Gamma$ is the Euler's Gamma-function.
Let $U$ be the Laplace transform of $u$, i.e. $U(s):=\int_0^\infty e^{-st}u(t)dt$.  Then the Laplace transform of the Caputo derivative  $\frac{\pd^\beta}{\pd t^\beta} u$ of $u$ can be calculated as follows:
\begin{align*}
\intl_0^\infty e^{-st}\frac{\pd^\beta}{\pd t^\beta} u(t) dt= s^{\beta}U(s)-s^{\beta-1}u(0+).
\end{align*}
The \emph{Riemann--Liouville fractional derivative} of  order $\beta$, $\beta\in(0,1)$, for a (sufficiently good) function $u$ is defined by
\begin{align*}
\left(\frac{d}{dt}\right)^\beta u(t):=\frac{1}{\Gamma(1-\beta)}\frac{d}{dt}\intl_0^t \frac{u(r)}{(t-r)^\beta}dr.
\end{align*} 
Then the Laplace transform of the Riemann-Liouville derivative  $\left(\frac{d}{dt}\right)^\beta u$ of $u$ can be calculated  as follows:
\begin{align*}
\intl_0^\infty e^{-st}\left(\frac{d}{dt}\right)^\beta u(t) dt= s^{\beta}U(s).
\end{align*}
Comparing both Laplace transforms and taking into account that the Laplace transform of $t^{-\beta}$ is $s^{\beta-1}\Gamma(1-\beta)$, one sees that if $u$ is absolutely continuous on bounded intervals then the Riemann-Liouville and Caputo derivatives of $u$ are related by
\begin{align}\label{Appl:3:eq:Cap-RL}
\frac{\pd^\beta}{\pd t^\beta} u(t)=\left(\frac{d}{dt}\right)^\beta u(t)-\frac{t^{-\beta}u(0+)}{\Gamma(1-\beta)}.
\end{align}
The Riemann-Liouville fractional derivative is more general since it does not require the first derivative of $u$ to exist.  Therefore, one may adopt the right hand side of the formula~\eqref{Appl:3:eq:Cap-RL} to define the Caputo derivative.
 The further generalization is to consider the so-called \emph{distributed order fractional derivative} $\mathcal{D}^\mu$ with the  order $\mu$ determined by a finite Borel measure $\mu$ defined on the interval $(0,1)$ and  such that $\mu(0,1)>0$ (cf.~\cite{MR2376152,MR2463064,MR2251542,MR2208034}):
\begin{align*}
\mathcal{D}^\mu u(t):=\intl_0^1 \frac{\pd^\beta}{\pd t^\beta} u(t)\mu(d\beta)=\intl_0^1\left[\left(\frac{d}{dt}\right)^\beta u(t)-\frac{t^{-\beta}u(0)}{\Gamma(1-\beta)}\right]\mu(d\beta).
\end{align*}
If $\mu$ is  the Dirac delta-measure  $\delta_{\beta_0}$ concentrated at a point $\beta_0\in(0,1)$, we return to Caputo fractional derivative of $\beta_0-$th order.


\section{Chernoff approximation of semigroups ge\-ne\-ra\-ted by some killed Feller processes}
\label{Section:CDP:2}

\subsection{General approach}

Let   $X= C_0(\cRd)$.  
Let  $(\xi_t)_{t\geq0}$ be a doubly Feller process on $\cRd$, $(T_t)_{t\geq0}$ be the corresponding (doubly Feller) semigroup and $(L,\Dom(L))$ be its Feller generator.
Let a  family  $(F(t))_{t\geq0}$ of bounded linear operators on  $X$ be Chernoff equivalent\footnote{Note, that some families $(F(t))_{t\geq0}$,  which are Chernoff equivalent to certain Feller semigroups, are constructed, e.g., in  \cite{ChApprSubSem,MR2999096,MR2759261}.} to the semigroup  $(T_t)_{t\geq0}$. Therefore,  $\|F(t)\|\leq e^{kt}$  for some  $k\in \cR$  and all  $t\geq 0$, and there exists a core $D$ for the operator $(L,\Dom(L))$ such that $\lim_{t\to0}\left\|\frac{F(t)\ffi-\ffi}{t}-L\varphi\right\|_X=0$ for all  $\varphi\in D$. Let us fix this core $D$.
 Let  $G\subset\cRd$ be a bounded regular domain. Let $(T^o_t)_{t\geq0}$ be the strongly continuous semigroup on $Y:=C_0(G)$ generated by the killed Feller process $(\xi^o_t)_{t\geq0}$ on $G$. Let $(L_o,\Dom(L_o))$ be the Feller generator of $(T^o_t)_{t\geq0}$. Our aim is to construct a family $(F_o(t))_{t\geq0}$ of bounded linear operators on $Y$ which is Chernoff equivalent to $(T^o_t)_{t\geq0}$. The family $(F_o(t))_{t\geq0}$ will be constructed by a proper modification of the family $(F(t))_{t\geq0}$  which is Chernoff equivalent to the semigroup  $(T_t)_{t\geq0}$ on $X$. 
 To this aim, we need some preparations.

\begin{assumption}\label{CDP:2:ass:E+D_0}
 We assume that there exists a set $D_o\sbs \Dom(L_o)\cap C^2_b(G)$ and a mapping $\mathcal{E}: Y\to C_c(\cRd)\sbs X$ such that
\begin{enumerate}
\item[(i)] $D_o$ is a core for $L_o$;
\item[(ii)]  $\mathcal{E}(\ffi)\big|_{\overline{G}}=\ffi$ for all $\ffi\in Y$;
\item[(iii)]  the mapping $\mathcal{E}$ is linear;
\item[(iv)] the mapping $\mathcal{E}$ preserves the supremum norm, i.e. $\|\ffi\|_Y=\|\mathcal{E}(\ffi)\|_X$ for all $\ffi\in Y$;
\item[(v)] $\mathcal{E}:\, D_o\to D$, where $D$ is a fixed core for $(L,\Dom(L))$;
\item[(vi)] $L(\mathcal{E}(\ffi))(x)=L_o\ffi(x)$  for each $\ffi\in D_o$ and each $x\in G$.
\end{enumerate}
\end{assumption}

\begin{remark}\label{rem:Lunardi}
 The space $Y$ can be naturally embedded into $X$ by assigning to  each $\ffi\in Y$  zero values outside the domain $G$. However, such embedding produces from smooth functions in $G$ only continuous functions in $\cRd$. This may violate the requirement (v) of Assumption~\ref{CDP:2:ass:E+D_0}. Note that $\Dom(L_o)$ typically contains functions whose zero extensions do not belong to  $\Dom(L)$. Moreover, there is no reason to expect the existence of  a core $D_o$ such that the zero extensions of its elements belong to  $\Dom(L)$. In particular, the sets of sufficiently smooth functions with compact supports in $G$  can not serve as a core even for the Laplacian $\Delta$ in $Y$\footnote{This fact together with its proof have been communicated to the author by Professor Alessandra Lunardi.}!  Indeed, assume that there exists a core $D_o\subset C_c(G)$ for $(\Delta,\Dom(\Delta))$ in $Y$.  Then for each $\ffi\in\Dom(\Delta)$ there exists a sequence $(\ffi_n)_{n\in\Nat}\subset D_o$ such that $\|\ffi_n-\ffi\|_Y\to0$ and $\|\Delta\ffi_n-\Delta\ffi\|_Y\to0$ as $n\to\infty$. By Corollary~3.1.21~(i),~(ii) and Remark 2.1.5 in \cite{MR3012216}, $\Dom(\Delta)$ is continuously embedded in $C^1(\overline{G})$. Hence $\left\|\frac{\pd\ffi_n}{\pd x_i}-\frac{\pd\ffi}{\pd x_i}\right\|_Y\to0$ as $n\to\infty$ for all $i=1,\ldots,d$. Therefore, $\nabla\ffi\big|_{\pd G}=0$ for each $\ffi\in\Dom(\Delta)$. This is however wrong since, e.g., the function $\ffi(x):=\sin x$ belongs to $\Dom(\Delta)=\Dom\left(\frac{d^2}{dx^2}\right)$ for $G:=(0,\pi)$ and $\frac{d\ffi}{dx}(x)=\cos x$ is not equal to zero on $\pd G$.
\end{remark}

\begin{remark}\label{CDP:2:rem:L_ABC-i}
Let the generator $L$ of a doubly Feller semigroup $(T_t)_{t\geq0}$  be given for each $\ffi\in C^2_0(\cRd)$ by  
\begin{align}\label{1:3:eq:L_ABC}
L\ffi(x)=-C(x)\ffi(x) - B(x)\cdot\nabla \ffi(x)+ \tr(A(x)\Hess\ffi(x)),
\end{align}
where the coefficients $A$, $B$ and $C$ are of the class\footnote{ Here and in the sequel, $C_b(\cRd)$ stands for the space of bounded continuous functions on $\cRd$; $C_c(\mathbb{R}^d)$ stands for the space of continuous functions on $\cRd$ with compact support;
$C^{0,\alpha}(\mathbb{R}^d)$ stands for H\"{o}lder continuous functions on $\cRd$ with exponent $\alpha\in(0,1]$; 
$C^m_b(\mathbb{R}^d)=\left\{\ffi\in C^m(\cRd)\,:\,\pd^\beta\ffi\in C_b(\cRd),\,|\beta|\leq m    \right\}$; 
$C^{m,\alpha}(\mathbb{R}^d)=\left\{\ffi\in C^m(\cRd)\,:\,\pd^\beta\ffi\in C^{0,\alpha}(\cRd),\,|\beta|= m    \right\}$; $C^{m,\alpha}_b(\mathbb{R}^d)=C^{m,\alpha}(\mathbb{R}^d)\cap C^m_b(\mathbb{R}^d)$;
$C^{m,\alpha}_c(\mathbb{R}^d)=C^{m,\alpha}(\mathbb{R}^d)\cap C_c(\mathbb{R}^d)$.} $C^{2,\alpha}_b(\cRd)$ for some $\alpha\in(0,1)$. Let there exist $a_0$, $A_0\in\cR$ such that 
\begin{align}\label{eq:CP:uniformEllipt}
0<a_0\leq A_0<\infty\quad\text{ and }\quad a_0|z|^2\leq z\cdot A(x)z\leq A_0|z|^2\quad\text{ for all }\,\,x,z\in\cRd.
\end{align}
 Assume that the coefficients $A$, $B$, $C$ are such that the set $C^{2,\alpha}_c(\cRd)$ is a core for the generator $L$ in $X$, and let $D:=C^{2,\alpha}_c(\cRd)$. Consider $D_o:=\left\{\ffi\in C^{2,\alpha}(G)\,:\, \ffi, L\ffi\in Y\right\}.$ If the boundary $\pd G$ is of the class $C^{4,\alpha}$ then there exists a strongly continuous semigroup  $(T^o_t)_{t\geq0}$ on $Y$ generated by the closure  of $(L,D_o)$ in $Y$ and  Assumption~\ref{CDP:2:ass:E+D_0} is fulfilled due to Thm.~2.2 and Thm.~3.4 in~\cite{MR2860750}.
\end{remark}


\begin{remark}\label{CDP:2:rem:L_ABC-ii}
   Let now the generator $L$ of a  Feller semigroup $(T_t)_{t\geq0}$ on $X$ be such that $C^\infty_c(\cRd)\subset\Dom(L)$ and $L\ffi$ is given for each $\ffi\in C^2_0(\cRd)$   by  formula~\eqref{2:eq:g35}.
Consider $L$ as the sum $L:=L_1+L_2$, where $L_1$ is the differential operator given by  formula~\eqref{1:3:eq:L_ABC} and $L_2$ is  the integral part
$$
L_2\ffi(x):=\intl_{y\neq0}\left(\ffi(x+y)-\ffi(x)-\frac{y\cdot\nabla\ffi(x)}{1+|y|^2}\right)N(x,dy),\quad\forall\ffi\in C^2_0(\cRd),\,\,\forall\,x\in\cRd.
$$
Let  
the coefficients $A$, $B$ and $C$ be again  of the class $C^{2,\alpha}_b(\cRd)$ for some $\alpha\in(0,1)$. And let there exist $a_0$, $A_0\in\cR$ such that \eqref{eq:CP:uniformEllipt} holds. Assume that   the closure of $(L_1,C^{2,\alpha}_c(\cRd))$ generates a strongly continuous contraction semigroup on $X$.
Choose the core  $D:=C^{2,\alpha}_c(\cRd)\sbs X$  for the generator $L_1$. Let  the boundary $\pd G$ be of the class $C^{4,\alpha}$ for some $\alpha\in(0,1)$.   Then, as in Remark~\ref{CDP:2:rem:L_ABC-i}, there exists an extension $\mathcal{E}$ satisfying Assumption~\ref{CDP:2:ass:E+D_0}~(ii)-(vi) with respect to $L_1$. Let $U\subset\cRd$ be another bounded domain such that $G\subset U$. One may consider, e.g., $U\equiv U_\eps:=\{x\in\cRd\,:\,\dist(x,G)<\eps\}$ for some small constant $\eps>0$.  Multiplying the extension $\mathcal{E}$ with a proper cut-off function, one obtains another extension $\mathcal{E}_U$, satisfying Assumption~\ref{CDP:2:ass:E+D_0}~(ii)-(vi) with respect to $L_1$ and the condition $\mathcal{E}_U: Y\to C_c(U)$.  Assume that $N(x,dy)$ is such that $(L_2,D)$ is $L_1$-bounded\footnote{Note that the fractional Laplacian $-(-\Delta)^{\alpha/2}$ is $\Delta$-bounded for all $\alpha\in(0,2)$.} and the closure of $(L,D)$, $L=L_1+L_2$, generates a doubly Feller semigroup on $X$. Let there exist a core $D_o\sbs \{\ffi\in C^{2,\alpha}(G)\,:\,\ffi,\,L_1\ffi\in Y\}$ for the corresponding killed generator $(L_o,\Dom(L_o))$. Then the extension $\mathcal{E}_U$  satisfies  Assumption~\ref{CDP:2:ass:E+D_0}~(ii)-(v) with respect to $L$, $D$ and $D_o$.  Since $L$ is a non-local operator, Assumption~\ref{CDP:2:ass:E+D_0}~(vi) is not fulfilled automatically.  And, for each $\ffi\in D_o$ and each $x\in G$, we have
 \begin{align*}
 L(\mathcal{E}_U(\ffi))(x)-L_o\ffi(x)&=\intl_{y\neq0}\left(\mathcal{E}_U(\ffi)(x+y)-\ffi(x+y)\right)N(x,dy)\\
 &
 =\intl_{y\in\left(-x+U\setminus\overline{G}\right)}\mathcal{E}_U(\ffi)(x+y)N(x,dy).
 \end{align*}
 Let $L_2$ satisfy the additional condition: there exists a bounded domain $U\supset G$ such that for each $x\in G$ holds
 \begin{align}\label{CDP:2:eq:NforE}
  \intl_{y\in\left(-x+U\setminus\overline{G}\right)}N(x,dy)=0.
  \end{align} 
Then the extension $\mathcal{E}_U$  satisfies  Assumption~\ref{CDP:2:ass:E+D_0}. The condition~\eqref{CDP:2:eq:NforE} actually means that the process $(\xi_t)_{t\geq0}$ is allowed to leave the domain $G$ either continuously, or by a sufficiently large jump which brings the process even out of $U$. Note that if $N(x,dy)$  corresponds to censored processes (i.e., $N(x,dy)$ satisfies~\eqref{CDP:2:eq:CensoredN}), then the condition~\eqref{CDP:2:eq:NforE} is fulfilled. The condition~\eqref{CDP:2:eq:NforE} is also fulfilled if, e.g., $\supp N(x,\cdot)\subset\cRd\setminus K$ for all $x\in G$ and some compact $K$ such that $\cup_{x\in G}\left(-x+U\setminus\overline{G}\right)\subset K$. One can take as $K$, e.g., a ball $B_R(x_0)$ such that its center $x_0\in G$ and its raduius $R> 2\diam U$. 
\end{remark}

Consider now a  continuous  monotone  function $s: (0, \infty) \to (0, \infty)$  such that 
$$
\liml_{t\to0}\frac{s(t)}{t}=0.
$$
 Define the set   $G_{s(t)} \subset G$  by
 $$
 G_{s(t)}:=\{x\in G \,:\, \dist(x, \partial G)>s(t)\}.
 $$
Let  $\left(\phi_{s(t)}\right)_{t>0}$ be a family of  functions  $\phi_{s(t)}: \cRd \to [0,1]$   such that all $\phi_{s(t)}\in C^\infty_c(G)$,  we have  $\phi_{s(t)}(x)=1$,  $\forall\,x\in G_{s(t)}$,   $\forall\,t>0$,
and  $\lim_{\,t\to t^*}\|\phi_{s(t)}-\phi_{s(t^*)}\|_X=0$ for each  $t^*>0$.
Note that   functions  $\phi_{s(t)}$ converge poitwise to the indicator $1_G$ of the domain  $G$ when $t\to 0$. 
Consider the family $(F_o(t))_{t\geq0}$  of operators on $Y$ defined by  $F_o(0):=\Id$ and for each  $t> 0$,  each $\ffi\in Y$  and each $x\in G$
\begin{equation}\label{1:eq:4:Fo}
F_o(t)\ffi(x):= \phi_{s(t)}(x)[F(t)\mathcal{E}(\ffi)](x)
\end{equation}
where the given  family  $(F(t))_{t\geq0}$  is Chernoff equivalent to the semigroup  $(T_t)_{t\geq0}$ on $X$ generated by $(L,\Dom(L))$ and $F'(0)\varphi=L\varphi$ for all  $\varphi\in D$.

\begin{lemma}\label{1:4:le:F_o}
The family $(F_o(t))_{t\geq0}$ acts on $Y$ and $\|F_o(t)\|\leq\|F(t)\|$. If the family $(F(t))_{t\geq0}$ is strongly continuous on $X$ then the family $(F_o(t))_{t\geq0}$ is strongly continuous on $Y$.
\end{lemma}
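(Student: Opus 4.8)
The plan is to verify the three assertions of the lemma in turn, dealing with $t=0$ trivially (where $F_o(0)=\Id$) and with $t>0$ through the defining formula \eqref{1:eq:4:Fo}. For the mapping property and the norm bound I would fix $\ffi\in Y$ and observe that, by Assumption~\ref{CDP:2:ass:E+D_0}(ii)--(iv), $\mathcal{E}(\ffi)\in C_c(\cRd)\subset X$, so $F(t)\mathcal{E}(\ffi)\in X=C_0(\cRd)$ is continuous; multiplying by $\phi_{s(t)}\in C^\infty_c(G)$ yields a continuous function supported in a compact subset of $G$, whose restriction to $G$ vanishes in a neighbourhood of $\pd G$ and hence lies in $Y$. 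The norm bound is then immediate: since $0\leq\phi_{s(t)}\leq1$ and the product is supported in $G$, one has $\|F_o(t)\ffi\|_Y\leq\|F(t)\mathcal{E}(\ffi)\|_X\leq\|F(t)\|\,\|\mathcal{E}(\ffi)\|_X=\|F(t)\|\,\|\ffi\|_Y$, the last equality being the norm preservation of $\mathcal{E}$ from Assumption~\ref{CDP:2:ass:E+D_0}(iv); thus $\|F_o(t)\|\leq\|F(t)\|$.

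For strong continuity I would fix $\ffi\in Y$ and first treat a point $t^*>0$. I would write the increment on $G$ as $F_o(t)\ffi-F_o(t^*)\ffi=\phi_{s(t)}\big[F(t)-F(t^*)\big]\mathcal{E}(\ffi)+\big[\phi_{s(t)}-\phi_{s(t^*)}\big]F(t^*)\mathcal{E}(\ffi)$ and estimate each summand in $\|\cdot\|_Y$; since all products are supported in $G$, these $Y$-norms coincide with the corresponding sup-norms over $\cRd$. The first summand is bounded by $\|(F(t)-F(t^*))\mathcal{E}(\ffi)\|_X$, which tends to $0$ by strong continuity of $(F(t))_{t\geq0}$ on $X$ applied to the fixed vector $\mathcal{E}(\ffi)$; the second is bounded by $\|\phi_{s(t)}-\phi_{s(t^*)}\|_X\,\|F(t^*)\mathcal{E}(\ffi)\|_X$, which tends to $0$ by the assumed continuity $\|\phi_{s(t)}-\phi_{s(t^*)}\|_X\to0$ of the cut-off family. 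This gives continuity at every $t^*>0$.

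The delicate point, which I would flag as the main obstacle, is continuity at $t^*=0$, where $F_o(0)=\Id$. Here I would exploit $\mathcal{E}(\ffi)\big|_{\overline G}=\ffi$ to rewrite, for $x\in G$, the increment as $F_o(t)\ffi(x)-\ffi(x)=\phi_{s(t)}(x)\big[F(t)\mathcal{E}(\ffi)-\mathcal{E}(\ffi)\big](x)+\big[\phi_{s(t)}(x)-1\big]\ffi(x)$. The first summand is controlled by $\|(F(t)-\Id)\mathcal{E}(\ffi)\|_X\to0$ as $t\to0$, since $F(0)=\Id$ and $(F(t))_{t\geq0}$ is strongly continuous. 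The genuinely new estimate is the second summand: because $\phi_{s(t)}\equiv1$ on $G_{s(t)}$ and $0\leq\phi_{s(t)}\leq1$, it is supported in the boundary layer $\{x\in G:\dist(x,\pd G)\leq s(t)\}$, so $\supl_{x\in G}|[\phi_{s(t)}(x)-1]\ffi(x)|\leq\supl_{\dist(x,\pd G)\leq s(t)}|\ffi(x)|$. Since $\liml_{t\to0}s(t)/t=0$ forces $s(t)\to0$, and since $\ffi\in Y=C_0(G)$ vanishes as $x\to\pd G$, this boundary-layer supremum tends to $0$ as $t\to0$; this is exactly where the hypothesis $\ffi\in C_0(G)$ (rather than merely a bounded continuous function) and the smallness of $s(t)$ are essential. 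Combining the three estimates yields strong continuity of $(F_o(t))_{t\geq0}$ on $Y$, completing the proof.
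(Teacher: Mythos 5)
Your proof is correct and takes essentially the same route as the paper's: the same mapping/norm argument via Assumption~\ref{CDP:2:ass:E+D_0}(ii)--(iv), and the same two decompositions of the increment (at $t^*>0$ and at $t^*=0$), with the boundary-layer term $[\phi_{s(t)}-1]\ffi$ controlled on $\overline{G\setminus G_{s(t)}}$. The only cosmetic difference is that you justify the vanishing of that term by $\ffi\in C_0(G)$ tending to zero at $\pd G$ together with $s(t)\to0$, while the paper phrases it via uniform continuity of $\ffi$ on the compact $\overline{G\setminus G_{s(t)}}$ --- the same underlying fact.
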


\begin{proof}
The family $(F_o(t))_{t\geq0}$ acts on $Y$ since, if  $\ffi\in Y=C_0(G)$, then  $\mathcal{E}(\ffi)\in C_c(\mathbb{R}^d)\sbs C_\infty(\mathbb{R}^d)=X$, $F(t)\mathcal{E}(\ffi)\in X$ and $\phi_{s(t)}[F(t)\mathcal{E}(\ffi)]\in  Y$. Moreover,
\begin{align*}
\|F_0(t)\ffi\|_Y&=\supl_{x\in G}|\phi_{s(t)}(x)[F(t)\mathcal{E}(\ffi)](x)|\\
&
\leq \|F(t)\mathcal{E}(\ffi)\|_X\leq\|F(t)\|\|\mathcal{E}(\ffi)\|_X=\|F(t)\|\|\ffi\|_Y.
\end{align*}

Let us show the strong continuity of the family $(F_o(t))_{t\geq0}$ under assumption that  the family $(F(t))_{t\geq0}$ is strongly continuous on $X$.  First, for each  $\ffi\in Y$
\begin{align*}
\liml_{t\to0}\|F_o(t)&\ffi-\ffi\|_Y=\liml_{t\to0}\supl_{\,x\in G}\big|\phi_{s(t)}(x)[F(t)\mathcal{E}(\ffi)](x)-\ffi(x)\big|\\
&
= \liml_{t\to0}\supl_{\,x\in G}\big|\phi_{s(t)}(x)\big([F(t)\mathcal{E}(\ffi)](x)-\mathcal{E}(\ffi)(x)\big)+\ffi(x)[\phi_{s(t)}(x)-1]\big|\\
&
\leq \liml_{t\to0}\|F(t)\mathcal{E}(\varphi)-\mathcal{E}(\varphi)\|_X +\liml_{t\to0}\supl_{\,x\,\in\, \overline{G\setminus G_{s(t)}}}|\varphi(x)|\\
&
=0
\end{align*}
due to strong continuity at zero of the family $(F(t))_{t\geq0}$  on $X$  and uniform continuity of  $\ffi$ on the compact  $\overline{G\setminus G_{s(t)}}$. Second, for each  $t^*>0$ and each  $\ffi\in Y$
\begin{align*}
&\liml_{t\to t^*}\|F_o(t)\ffi-F_o(t^*)\ffi\|_Y\\
&=\liml_{t\to t^*}\supl_{\,x\in G}\big|\phi_{s(t)}(x)[F(t)\mathcal{E}(\ffi)](x)-\phi_{s(t^*)}(x)[F(t^*)\mathcal{E}(\ffi)](x)\big|\\
&
=\liml_{t\to t^*}\supl_{\,x\in G}\bigg|\phi_{s(t)}(x)\big([F(t)\mathcal{E}(\ffi)](x)-[F(t^*)\mathcal{E}(\ffi)](x)\big)+\\
&\phantom{qwwqwqwfgjhgjhgdfghfsqqwqwqwq}+(\phi_{s(t)}(x)-\phi_{s(t^*)}(x))[F(t^*)\mathcal{E}(\ffi)](x)\bigg|\\
&
\leq \liml_{t\to t^*}\|\phi_{s(t)}\|_Y\cdot\|[F(t)\mathcal{E}(\ffi)]-[F(t^*)\mathcal{E}(\ffi)]\|_X 
+\|F(t^*)E(\ffi)\|_X\cdot\|\phi_{s(t)}-\phi_{s(t^*)}\|_Y\\
&
=0
\end{align*}
due to strong continuity  of the family $(F(t))_{t\geq0}$  on $X$  and  propersties of the family $\left(\phi_{s(t)}\right)_{t>0}$. Hence Lemma is proved.
\end{proof}

\begin{theorem}\label{CDP:2:thm}
Under Assumption~\ref{CDP:2:ass:E+D_0},  the family $(F_o(t))_{t\ge0}$ is Chernoff equivalent to  the semigroup $(T^o_t)_{t\geq0}$, i.e.
$$
T^o_t\ffi=\liml_{n\to\infty}\left[F_o(t/n)   \right]^n\ffi
$$
for each $\ffi\in Y$ locally uniformly with respect to $t\geq0$.
\end{theorem}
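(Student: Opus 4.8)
The plan is to verify the three hypotheses of the Chernoff theorem (Theorem~\ref{0:thm:Chernoff-my}) for the family $(F_o(t))_{t\geq0}$ with respect to $(T^o_t)_{t\geq0}$, with $(F_o(t))$, $L_o$ and $D_o$ playing the roles of $(F(t))$, $L$ and $D$. Hypothesis (i) is immediate from $F_o(0)=\Id$. Hypothesis (ii) follows at once from Lemma~\ref{1:4:le:F_o} and Chernoff equivalence of $(F(t))$ to $(T_t)$, which give $\|F_o(t)\|\le\|F(t)\|\le e^{kt}$. Since $D_o$ is a core for $L_o$ by Assumption~\ref{CDP:2:ass:E+D_0}(i), it is dense in $Y$, $(L_o,D_o)$ is closable, and its closure $(L_o,\Dom(L_o))$ generates $(T^o_t)$; hence the substance of the argument is hypothesis (iii): $\liml_{t\to0}\big\|\frac{F_o(t)\ffi-\ffi}{t}-L_o\ffi\big\|_Y=0$ for every $\ffi\in D_o$.

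To establish (iii), I would fix $\ffi\in D_o$ and use $\mathcal{E}(\ffi)(x)=\ffi(x)$ for $x\in G$ (Assumption~\ref{CDP:2:ass:E+D_0}(ii)) to decompose, for $x\in G$,
\begin{align*}
\frac{F_o(t)\ffi(x)-\ffi(x)}{t}
&=\phi_{s(t)}(x)\left[\frac{[F(t)\mathcal{E}(\ffi)](x)-\mathcal{E}(\ffi)(x)}{t}-L\mathcal{E}(\ffi)(x)\right]\\
&\quad+\phi_{s(t)}(x)\,L\mathcal{E}(\ffi)(x)+\frac{\phi_{s(t)}(x)-1}{t}\,\ffi(x),
\end{align*}
and then estimate the three resulting terms separately in the norm of $Y$. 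The first term is handled by $0\le\phi_{s(t)}\le1$ together with $\mathcal{E}(\ffi)\in D$ (Assumption~\ref{CDP:2:ass:E+D_0}(v)): since $F'(0)=L$ on $D$, the bracket tends to $0$ in the norm of $X$, so this term tends to $0$ in $Y$. For the second term, Assumption~\ref{CDP:2:ass:E+D_0}(vi) gives $L\mathcal{E}(\ffi)(x)=L_o\ffi(x)$ on $G$, and $L_o\ffi\in Y=C_0(G)$; as $\phi_{s(t)}=1$ on $G_{s(t)}$, the difference $\phi_{s(t)}L\mathcal{E}(\ffi)-L_o\ffi=(\phi_{s(t)}-1)L_o\ffi$ is supported in the shrinking boundary layer $\overline{G\setminus G_{s(t)}}$, on which $|L_o\ffi|\to0$ because $L_o\ffi$ vanishes at $\pd G$ and $s(t)\to0$. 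Hence the second term converges to $L_o\ffi$ uniformly on $G$, which is exactly the target limit.

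The decisive term — and the one I expect to be the main obstacle — is the boundary-layer term $\frac{\phi_{s(t)}-1}{t}\ffi$, where a factor $1/t$ must be absorbed. Here I would use that $\ffi\in D_o\sbs C^2_b(G)$ has bounded gradient and lies in $Y$, so it vanishes on $\pd G$; consequently $|\ffi(x)|\le M\,\dist(x,\pd G)$ for some constant $M$ and all $x$ in a fixed one-sided neighbourhood of $\pd G$ (via the nearest-point projection, applicable since $s(t)\to0$ forces $x$ close to $\pd G$). Because this term is supported on $G\setminus G_{s(t)}=\{x\in G:\dist(x,\pd G)\le s(t)\}$ and $0\le1-\phi_{s(t)}\le1$, one obtains
\[
\supl_{x\in G}\left|\frac{\phi_{s(t)}(x)-1}{t}\,\ffi(x)\right|\le\frac{1}{t}\supl_{x\in G\setminus G_{s(t)}}|\ffi(x)|\le M\,\frac{s(t)}{t}\longrightarrow0\quad(t\to0),
\]
which is precisely where the hypothesis $\liml_{t\to0}s(t)/t=0$ enters. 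Combining the three estimates yields (iii), and the Chernoff theorem then delivers $T^o_t\ffi=\liml_{n\to\infty}[F_o(t/n)]^n\ffi$ for all $\ffi\in Y$, locally uniformly in $t\ge0$. The only point requiring care beyond routine estimation is the Lipschitz-type boundary bound $|\ffi(x)|\le M\,\dist(x,\pd G)$; it is automatic for the regular ($C^{4,\alpha}$) domains underlying Remarks~\ref{CDP:2:rem:L_ABC-i}--\ref{CDP:2:rem:L_ABC-ii}, but deserves to be stated explicitly as the mechanism coupling the rate $s(t)/t\to0$ to the boundary decay of $\ffi$.
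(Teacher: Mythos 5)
Your proposal is correct and follows essentially the same route as the paper: verify hypotheses (i)--(ii) of the Chernoff theorem via Lemma~\ref{1:4:le:F_o}, then establish (iii) on the core $D_o$ by the identical three-term decomposition, with the first term controlled by $\mathcal{E}(\ffi)\in D$ and $F'(0)=L$ on $D$, the $(\phi_{s(t)}-1)L_o\ffi$ term by $L_o\ffi\in C_0(G)$ vanishing on the boundary layer, and the $\frac{1}{t}(\phi_{s(t)}-1)\ffi$ term by the Lipschitz bound $|\ffi(x)|\leq M s(t)$ on $G\setminus G_{s(t)}$ combined with $s(t)/t\to0$. The boundary bound you single out is obtained in the paper exactly as you suggest, from $\ffi\in D_o\subset C^2_b(G)\cap Y$ being Lipschitz on $\overline{G}$ and vanishing at $\pd G$.
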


\begin{proof}
Due to Lemma~\ref{1:4:le:F_o}, we have $\|F_o(t)\|\leq\|F(t)\|\leq e^{kt}$ for some $k\in\cR$ and all $t\geq0$.  Hence it is sufficient to show that $
\lim_{t\to 0}\| t^{-1}(F_o(t)\varphi -\varphi) - L_o\varphi\|_Y=0
$ for all $\ffi\in D_o$.  Due to Assumption~\ref{CDP:2:ass:E+D_0}
\begin{align*}
&\bigg\| \frac{F_o(t)\ffi -\ffi}{t} - L_o\ffi\bigg\|_Y=\supl_{x\in G}\bigg| \frac{\phi_{s(t)}(x)[F(t)\mathcal{E}(\ffi)](x) -\ffi(x)}{t} - L_o\ffi(x)\bigg|\\
&
\leq \supl_{x\in G}\bigg[|\phi_{s(t)}(x)|\bigg| \frac{F(t)\mathcal{E}(\ffi)(x) -\mathcal{E}(\ffi)(x)}{t} - L\mathcal{E}(\ffi)(x)\bigg|\\
&
\phantom{fjvndfkvfvjvbjfvbjsbfvsbvs}   +\big(| \ffi(x)/t  |+ |L_o\ffi(x)|\big)|1- \phi_{s(t)}(x)| \bigg]\\
&
\leq \left\|\frac{F(t)\mathcal{E}(\ffi) -\mathcal{E}(\ffi)}{t} - L\mathcal{E}(\ffi)\right\|_X+\supl_{x\in \overline{G\setminus G_{s(t)}}}\big(| \ffi(x)/t  |+ |L_o\ffi(x)|\big)\\
&
\to 0,\quad\text{ as }\,\, t\to0.
\end{align*}
Indeed, $\lim_{t\to0}\left\|\frac{F(t)\mathcal{E}(\ffi) -\mathcal{E}(\ffi)}{t} - L\mathcal{E}(\ffi)\right\|_X=0$ since 
$\mathcal{E}(\ffi)\in D$ and $F'(0)=L$ on $D$ by our assumptions. Further, $\ffi\in D_o\sbs C^2_b(G)\cap Y$. Hence $\ffi$ is Lipschitz on $\overline{G}$, i.e. there exists a constant $M>0$ such that  the inequality  $|\ffi(x)-\ffi(z)|\leq M|x-z|$ holds for  all $x$, $z\in \overline{G}$. Moreover,  for each $x\in G\setminus G_{s(t)}$, there exists  at least one point $z_x\in\pd G$ such that $\dist(x,z_x)\leq s(t)$. Therefore, $|\ffi(x)|=|\ffi(x)-\ffi(z_x)|\leq M s(t)$ for each $x\in G\setminus G_{s(t)}$.  And 
$$
\liml_{t\to0}\supl_{x\in \overline{G\setminus G_{s(t)}}}\frac{|\ffi(x)|}{t}\leq\liml_{t\to0}M\frac{s(t)}{t}=0.
$$
Besides, since $\ffi\in D_o\sbs \Dom(L_o)$, we have $L_o\ffi\in Y=C_0(G)$. Hence 
$$
\lim_{t\to0}\supl_{x\in G\setminus G_{s(t)}}|L_o\ffi(x)|=0
$$
 due to uniform continuity of the function $L_o\ffi$ on compacts $\overline{G\setminus G_{s(t)}}$. Thus, Theorem is proved.
\end{proof}

\begin{remark}
Analogues of Theorem~\ref{CDP:2:thm} are also valid in unbounded domains $G\sbs\cRd$, in domains $G$ of a locally compact metric space $Q$ and in other couples of Banach spaces $X$ and $Y$ (e.g., $X:=L^p(\cRd)$ and $Y:=L^p(G)$, $p\in[1,\infty)$) under  corresponding  modifications of Assumption~\ref{CDP:2:ass:E+D_0} and   properties of the family $\left(\phi_{s(t)}\right)_{t>0}$, as well as under additional assumption on the existence of the semigroup $(T^o_t)_{t\geq0}$.
\end{remark}

\subsection{Feynman Formula for semigroups generated by a class of killed Feller processes}\label{Subsection:CDP-nonLoc}

Let $(T_t)_{t\geq0}$ be a doubly Feller semigroup on $X$ whose generator $(L,\Dom(L))$ is such that  the set $D:=C^\infty_c(\cRd)$ is a core for $L$. Hence $L\ffi$ is given by formula~\eqref{2:eq:g35} for each $\ffi\in C^2_0(\cRd)$. Assume that the coefficients $A$, $B$, $C$ in formula~\eqref{2:eq:g35} are bounded and continuous. Let  there exist $a_0$, $A_0\in\cR$ such that condition~\eqref{eq:CP:uniformEllipt} holds.
 Let the measure $N(x,dy)$ in  formula~\eqref{2:eq:g35} do not depend on $x$, i.e. $N(x,dy):=N(dy)$ for all $x\in\cRd$. Let $(\eta_t)_{t\geq0}$ be the convolution semigroup on $\cRd$ corresponding\footnote{I.e.  the Fourier transforms $\mathcal{F}[\eta_t]$ of sub-probability measures $\eta_t$ for all $t\geq0$ are given by $\mathcal{F}[\eta_t](x)=(2\pi)^{-d/2} e^{-tr(x)}$, where the function $r\,:\,\cRd\to\cC$ is defined by
$
r(x):=\int_{\cRd\setminus\{0\}}\left(1-e^{iy\cdot x}+\frac{iy\cdot x}{1+|y|^2}\right)N(dy).
$} to $N(dy)$.
 Then, by Thm.~3.1 in \cite{MR2999096} and Remark~15  in \cite{MR3455669}, the following family $(F(t))_{t\geq0}$ on $X$ is Chernoff equivalent to $(T_t)_{t\geq0}$: $F(0)=\Id$ and for all $t>0$, all $\ffi\in X$ and all $x\in\cRd$
\begin{align}\label{eq:F(t)-NonLocCP}
F(t)\ffi(x):=\frac{e^{-tC(x)}}{\sqrt{(4\pi t)^{d}\det A(x)}}\intl_{\cRd}\intl_{\cRd}
e^{-\frac{A^{-1}(x)(z-x+tB(x)+y)\cdot(z-x+tB(x)+y)}{4t}}\ffi(y)dy\,\eta_t(dz).
\end{align}
Moreover, the family $(F(t))_{t\geq0}$ is a strongly continuous family of contractions. Note also that, for $g(x)\equiv1$, we have $F(t)g(x)=\exp\left\{-tC(x)\right\}\leq1$ for all $x\in\cRd$.

Let $G\subset\cRd$ be a regular bounded domain. Consider the corresponding Feller semigroup $(T^o_t)_{t\geq0}$ on $Y$. Let Assumption~\ref{CDP:2:ass:E+D_0} be fulfilled for some core $D_o$ of the generator of $(T^o_t)_{t\geq0}$ and for some extension $\mathcal{E}\,:\,Y\to X$ with respect to $D_o$ and $D:=C^\infty_c(\cRd)$. Then, by Theorem~\ref{CDP:2:thm}, the family $(F_o(t))_{t\geq0}$, constructed from the family $(F(t))_{t\geq0}$ in~\eqref{eq:F(t)-NonLocCP} through the formula~\eqref{1:eq:4:Fo}, is Chernoff equivalent to the semigroup $(T^o_t)_{t\geq0}$. Hence $F_o(0)=\Id$ and for all $t>0$ and all $\ffi\in Y$
\begin{align}\label{eq:Fo(t)-NonLocCP}
&F_o(t)\ffi(x):=\frac{\phi_{s(t)}(x)e^{-tC(x)}}{\sqrt{(4\pi t)^{d}\det A(x)}}\times\\
&
\times\intl_{\cRd}\intl_{\cRd}
\exp\left\{-\frac{A^{-1}(x)(z-x+tB(x)+y)\cdot(z-x+tB(x)+y)}{4t}\right\}\mathcal{E}(\ffi)(y)dy\,\eta_t(dz).\nonumber
\end{align}
Therefore, we have uniformly with respect to $x_0\in G$ and  uniformly with respect to $t\in(0,t^*]$ for all $t^*>0$
\begin{align}\label{eq:CPnonLoc-Fno(t)}
T^o_t\ffi(x_0)=\liml_{n\to\infty} F^n_o(t/n)\ffi(x_0)=&\liml_{n\to\infty}\intl_{\cRd}\intl_{\cRd}\ldots\intl_{\cRd}\intl_{\cRd}\left(\prodl_{k=1}^n \phi_{s(t/n)}(x_{k-1})\right)\mathcal{E}(\ffi)(x_n)\times\nonumber\\
&
\times\Psi^{x_0}_{t,n}(x_1,\ldots,z_n)\,dx_n\eta_{t/n}(dz_n)\cdots dx_1\eta_{t/n}(dz_1),
\end{align}
where
\begin{align*}
&\Psi^{x_0}_{t,n}(x_1,\ldots,z_n):=\left(\prodl_{k=1}^n(4\pi t/n)^{-d/2}(\det A(x_{k-1}))^{-1/2} \right)
\exp\left\{-\frac{t}{n}\suml_{k=1}^n C(x_{k-1})\right\}\times\\
&
\times\exp\left\{-\suml_{k=1}^n\frac{A^{-1}(x_{k-1})(z_k-x_{k-1}+tB(x_{k-1})+x_k)\cdot(z_k-x_{k-1}+tB(x_{k-1})+x_k)}{4t/n}\right\}.
\end{align*}
Since all  $\phi_{s(t)}$ are smooth functions with compact supports in $G$  and $\mathcal{E}(\ffi)$ is a continuous function with compact support $K:=\supp \mathcal{E}(\ffi)$,  the $2n$-fold iterated integrals over $\cRd$ in~\eqref{eq:CPnonLoc-Fno(t)} coincide with the following $2n$-fold multiple integral
\begin{align*}
\Phi^\ffi_n(t,x_0):=\intl_{G^{n-1}\times K\times\cR^{nd}}&\left(\prodl_{k=1}^n \phi_{s(t/n)}(x_{k-1})\right)\mathcal{E}(\ffi)(x_n)\times\\
&
\times\Psi^{x_0}_{t,n}(x_1,\ldots,z_n)\,dx_1\cdots dx_n\eta_{t/n}(dz_1)\cdots \eta_{t/n}(dz_n).
\end{align*}
Consider also
\begin{align*}
\Theta^\ffi_n(t,x_0):=\intl_{G^{n}\times\cR^{nd}}\ffi(x_n)\Psi^{x_0}_{t,n}(x_1,\ldots,z_n)\,dx_1\cdots dx_n\eta_{t/n}(dz_1)\cdots \eta_{t/n}(dz_n).
\end{align*}
Let us show that for all $t>0$ and all $x_0\in G$ holds
\begin{align}\label{eq:CDnonLOc-justEq}
T^o_t\ffi(x_0)=\liml_{n\to\infty}\Theta^\ffi_n(t,x_0).
\end{align}
And the convergence in~\eqref{eq:CDnonLOc-justEq} is locally uniform with respect to $x_0\in G$ and uniform with respect to $t\in(0,t^*]$ for all $t^*>0$. So, consider a number $t^*>0$ and a compact  $\Upsilon\subset G$.   Let $x_0\in\Upsilon$ and $t\in(0,t^*]$. Then
\begin{align*}
&\left|\Phi^\ffi_n(t,x_0)- \Theta^\ffi_n(t,x_0)\right|
\leq\intl_{G^{n-1}\times K\times\cR^{nd}} \left(\prodl_{k=1}^n |\phi_{s(t/n)}(x_{k-1})-1_G(x_{k-1})|\right)|\mathcal{E}(\ffi)(x_n)|\times\\
&
\phantom{tralalatralalatralalatralalatrala} \times\Psi^{x_0}_{t,n}(x_1,\ldots,z_n)dx_1\cdots dx_{n}\eta_{t/n}(dz_1)\cdots \eta_{t/n}(dz_n)\\
&
+ \intl_{ G^{n-1}\times\cR^{nd}}\left(\,\intl_{K\setminus {G}} |\mathcal{E}(\ffi)(x_n)|\Psi^{x_0}_{t,n}(x_1,\ldots,z_n)dx_n\right)dx_1\cdots dx_{n-1}\eta_{t/n}(dz_1)\cdots \eta_{t/n}(dz_n).
\end{align*}
Let us estimate each of the summands separately. Denote the first summand by $I^\ffi_n(t,x_0)$ and the second by $J^\ffi_n(t,x_0)$.  We have with $g(x)\equiv1$
\begin{align*}
&I^\ffi_n(t,x_0)\\
&
\leq\|\ffi\|_Y|\phi_{s(t/n)}(x_{0})-1_G(x_{0})|\intl_{\cRd}\ldots\intl_{\cRd}\Psi^{x_0}_{t,n}(x_1,\ldots,z_n)dx_n\eta_{t/n}(dz_n)\cdots dx_{1}\eta_{t/n}(dz_1)\\
&
=\|\ffi\|_Y|\phi_{s(t/n)}(x_{0})-1_G(x_{0})| \left(F^n(t/n)g(x_0)\right)\leq \|\ffi\|_Y|\phi_{s(t/n)}(x_{0})-1_G(x_{0})|.
\end{align*}
By the construction of the sets $G_{s(t)}$, there exists $N\in\Nat$ such that $\Upsilon\subset G_{s(t^*/N)}$. Hence for all $n\geq N$, $x_0\in\Upsilon$, $t\in(0,t^*]$ holds $|\phi_{s(t/n)}(x_{0})-1_G(x_{0})|=0$. Therefore,
\begin{align*}
\liml_{n\to\infty}I^\ffi_n(t,x_0)=0\quad\text{ uniformly with respect to }\, x_0\in \Upsilon,\,\, t\in(0,t^*].
\end{align*}
Consider now the second summand $J^\ffi_n(t,x_0)$.  Due to condition~\eqref{eq:CP:uniformEllipt}, we  have  for all  $x\in\overline{G}$, $y\in K$, $z\in\cRd$, $t\in(0,t^*]$ and $n\in\Nat$
\begin{align*}
&\frac{e^{-tC(x)/n}}{\sqrt{(4\pi t/n)^{d}\det A(x)}}
\exp\left\{-\frac{A^{-1}(x)(z-x+tB(x)/n+y)\cdot(z-x+tB(x)/n+y)}{4t/n}\right\}\\
&
\leq M(A_0/a_0)^{d/2} (4A_0\pi t/n)^{-d/2}\exp\left\{-\frac{|x-y|^2}{4A_0t/n} \right\},
\end{align*}
where
$$
M:=\supl_{x\in\overline{G},\,y\in K,\, z\in\cRd,\,t\in(0,t^*], n\in\Nat }\exp\left\{-\frac{|z+tB(x)/n|^2+2(z+tB(x)/n)\cdot(y-x)}{4A_0t/n}  \right\}<\infty.
$$
Therefore, with $c:=M(A_0/a_0)^{d/2}$ and $p_{A_0}(t,x,y):=(4A_0\pi t)^{-d/2}\exp\left\{-\frac{|x-y|^2}{4A_0t} \right\}$
\begin{align*}
J^\ffi_n(t,x_0)&\leq c\left(\supl_{x\in \overline{G}} \intl_{K\setminus {G}} p_{A_0}(t/n,x,y)|\mathcal{E}(\ffi)(y)|\,dy\right)\left( F^{n-1}(t/n)g(x_0)\right)\\
&
\leq c\left(\supl_{x\in \overline{G}} \intl_{K\setminus {G}} p_{A_0}(t/n,x,y)|\mathcal{E}(\ffi)(y)|\,dy\right).
\end{align*}
Denote by $G^\delta$ the $\delta$-neighborhood of $G$ in $\cRd$, i.e. $G^\delta:=\{x\in\cRd\,:\,\dist(x,G)<\delta\}$, $\delta>0$. Fix any $\eps>0$.  Since  $\mathcal{E}(\ffi)$ is a continuous function on $\cRd$ which equals zero on $\pd G$, there exists $\delta>0$ such that $|\mathcal{E}(\ffi)|\leq\eps/2$ on $G^\delta\setminus G$. Hence
\begin{align*}
&\supl_{x\in \overline{G}} \intl_{K\setminus {G}} p_{A_0}(t/n,x,y)|\mathcal{E}(\ffi)(y)|\,dy\leq \supl_{x\in \overline{G}} \intl_{G^\delta\setminus {G}} p_{A_0}(t/n,x,y)|\mathcal{E}(\ffi)(y)|\,dy\\
&
+\supl_{x\in \overline{G}} \intl_{K\setminus {G^\delta}} p_{A_0}(t/n,x,y)|\mathcal{E}(\ffi)(y)|\,dy\leq\frac{\eps}{2}+\|\ffi\|_Y\supl_{x\in \overline{G}} \intl_{K\setminus {G^\delta}} p_{A_0}(t/n,x,y)\,dy.
\end{align*}
Due to Gaussian fall off of $p_{A_0}$ there exists $N\in\Nat$ such that for all $n\geq N$ and all $t\in(0,t^*]$  holds
$$
\|\ffi\|_Y\supl_{x\in \overline{G}} \intl_{K\setminus {G^\delta}} p_{A_0}(t/n,x,y)\,dy\leq\frac{\eps}{2}.
$$
Consequently, since $\eps>0$ has been chosen arbitrary, 
\begin{align*}
\liml_{n\to\infty}J^\ffi_n(t,x_0)=0\quad\text{ uniformly with respect to }\, x_0\in \Upsilon,\,\, t\in(0,t^*].
\end{align*}
Therefore, the following statement  is proved.
\begin{proposition}\label{prop:CDP-nonLoc}
Under all assumptions of this Subsection, the following Feynman formula holds for the semigroup $(T^o_t)_{t\geq0}$:
\begin{align}\label{eq:FF:CDP-nonLoc}
&T^o_t\ffi(x_0)=\liml_{n\to\infty}\Theta^\ffi_n(t,x_0)
=\liml_{n\to\infty}\intl_{G^n\times\cR^{nd}}\ffi(x_n)\left(\prodl_{k=1}^n(4\pi t/n)^{-d/2}(\det A(x_{k-1}))^{-1/2} \right)\times\nonumber\\
&
\times
\exp\left\{-\suml_{k=1}^n\frac{A^{-1}(x_{k-1})(z_k-x_{k-1}+tB(x_{k-1})+x_k)\cdot(z_k-x_{k-1}+tB(x_{k-1})+x_k)}{4t/n}\right\}\times\nonumber\\
&
\phantom{tralala}\times\exp\left\{-\frac{t}{n}\suml_{k=1}^n C(x_{k-1})\right\}dx_1\cdots dx_n\eta_{t/n}(dz_1)\cdots \eta_{t/n}(dz_n),\quad\forall\,\ffi\in Y,\,\,\forall\,x_0\in G.
\end{align}
And the convergence in this  Feynman formula is locally uniform with respect to $x_0\in G$ and uniform with respect to $t\in(0,t^*]$ for all $t^*>0$.
\end{proposition}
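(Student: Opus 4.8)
The plan is to transfer the Chernoff equivalence that is already available into the explicit multiple-integral form $\Theta^\ffi_n$, rather than to analyse the killed semigroup afresh. By Theorem~\ref{CDP:2:thm}, the family $(F_o(t))_{t\geq0}$ built from~\eqref{eq:F(t)-NonLocCP} through~\eqref{1:eq:4:Fo} is Chernoff equivalent to $(T^o_t)_{t\geq0}$, so $T^o_t\ffi(x_0)=\liml_{n\to\infty}[F_o(t/n)]^n\ffi(x_0)$ with convergence in the norm of $Y$, hence uniform in $x_0\in G$ and locally uniform in $t$. First I would expand $[F_o(t/n)]^n\ffi(x_0)$ as the $2n$-fold iterated integral~\eqref{eq:CPnonLoc-Fno(t)} against the kernel $\Psi^{x_0}_{t,n}$; the iteration is legitimate because each $F_o(t/n)$ is a genuine integral operator whose kernel is a Gaussian multiplied by a sub-probability measure, so Fubini applies. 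It then suffices to show that these approximants and the clean integrals $\Theta^\ffi_n$ have the same limit, locally uniformly in $x_0$ and uniformly in $t\in(0,t^*]$.

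Using that every cut-off $\phi_{s(t/n)}$ and the extension $\mathcal{E}(\ffi)$ have compact support (the latter equal to $K$), the iterated integral collapses to the finite-domain integral $\Phi^\ffi_n(t,x_0)=[F_o(t/n)]^n\ffi(x_0)$ over $G^{n-1}\times K\times\cR^{nd}$. I would then bound $|\Phi^\ffi_n-\Theta^\ffi_n|$ by two terms: a term $I^\ffi_n$ measuring the replacement of the smooth cut-offs by the sharp indicator $1_G$, and a term $J^\ffi_n$ measuring the replacement of $\mathcal{E}(\ffi)$ integrated over $K$ by $\ffi$ integrated over $G$, i.e.\ the leftover mass over $K\setminus G$. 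The term $I^\ffi_n$ is the easy one: I would pull out only the $k=1$ factor $|\phi_{s(t/n)}(x_0)-1_G(x_0)|$, bound the remaining factors by $1$ and $|\mathcal{E}(\ffi)|$ by $\|\ffi\|_Y$, and recognise the residual nonnegative iterated integral as $F^n(t/n)g(x_0)\leq1$ (for $g\equiv1$, as computed above). Since any compact $\Upsilon\subset G$ lies in some $G_{s(t^*/N)}$, the factor $\phi_{s(t/n)}(x_0)-1_G(x_0)$ vanishes identically for $n\geq N$, $x_0\in\Upsilon$, $t\in(0,t^*]$, giving $I^\ffi_n\to0$ there.

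The hard part will be $J^\ffi_n$, because $\mathcal{E}(\ffi)$ need not vanish on $K\setminus G$ — it agrees with $\ffi$ only on $\overline{G}$ — so the integrand there is genuinely nonzero and must be controlled analytically. Here I would invoke uniform ellipticity~\eqref{eq:CP:uniformEllipt} to replace the anisotropic form $A^{-1}(x)v\cdot v$ by $|v|^2/A_0$ and the prefactor $(\det A(x))^{-1/2}$ by $a_0^{-d/2}$, so that the one-step kernel of $F(t/n)$ is dominated by a constant multiple $c\,p_{A_0}(t/n,x,y)$ of the isotropic heat kernel $p_{A_0}(\tau,x,y)=(4\pi A_0\tau)^{-d/2}\exp\{-|x-y|^2/(4A_0\tau)\}$, the bounded drift $B$ and the $z$-variable (integrated against the sub-probability $\eta_{t/n}$) contributing only a bounded shift of the Gaussian centre. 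Factoring off the residual chain as $F^{n-1}(t/n)g(x_0)\leq1$, this reduces $J^\ffi_n$ to $c\,\supl_{x\in\overline{G}}\intl_{K\setminus G}p_{A_0}(t/n,x,y)|\mathcal{E}(\ffi)(y)|\,dy$. I would then split $K\setminus G$ into a thin collar $G^\delta\setminus G$ and a far part $K\setminus G^\delta$: on the collar, continuity of $\mathcal{E}(\ffi)$ and its vanishing on $\partial G$ make $|\mathcal{E}(\ffi)|\leq\eps/2$ for $\delta$ small; on the far part, $\dist(x,y)\geq\delta$ for $x\in\overline{G}$ forces $\supl_{x\in\overline{G}}\intl_{K\setminus G^\delta}p_{A_0}(t/n,x,y)\,dy\to0$ by Gaussian concentration as $t/n\to0$, uniformly in $t\in(0,t^*]$. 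Hence $J^\ffi_n\to0$ uniformly on $\Upsilon\times(0,t^*]$.

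Combining the two estimates gives $|\Phi^\ffi_n-\Theta^\ffi_n|\to0$ locally uniformly in $x_0\in G$ and uniformly in $t\in(0,t^*]$; since $\Phi^\ffi_n(t,x_0)=[F_o(t/n)]^n\ffi(x_0)\to T^o_t\ffi(x_0)$ with the same uniformity by the Chernoff equivalence, the asserted Feynman formula $T^o_t\ffi(x_0)=\liml_{n\to\infty}\Theta^\ffi_n(t,x_0)$ follows, with convergence locally uniform in $x_0$ and uniform in $t\in(0,t^*]$.
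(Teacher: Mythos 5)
Your proposal follows essentially the same route as the paper's own proof: Chernoff equivalence of $(F_o(t))_{t\geq0}$ via Theorem~\ref{CDP:2:thm}, collapse of the iterated integrals to $\Phi^\ffi_n$ over $G^{n-1}\times K\times\cR^{nd}$, the splitting $|\Phi^\ffi_n-\Theta^\ffi_n|\leq I^\ffi_n+J^\ffi_n$, the vanishing of $I^\ffi_n$ for $n\geq N$ via $\Upsilon\subset G_{s(t^*/N)}$ and the bound $F^n(t/n)g\leq1$, and the domination of $J^\ffi_n$ by the isotropic kernel $p_{A_0}$ with the collar/far-field split of $K\setminus G$. The argument and all key estimates coincide with those in the paper.
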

As a particular case, we have the following (cf.~\cite{MR2729591}).
\begin{corollary}\label{cor:example:CDP-BGS}
Let all the assumptions of this Subsection be fulfilled. Let $N(dy)\equiv0$, i.e. $\eta_t=\delta_0$ for all $t\geq0$.  Then   the family $(F(t))_{t\geq0}$ given in~\eqref{eq:F(t)-NonLocCP} has the following view:   $F(0):=\Id$ and for all $t>0$ and all $\ffi\in X$
  \begin{align}\label{1:eq:F^ABC+}
F(t)\ffi(x):&=\frac{e^{-tC(x)}}{\sqrt{(4\pi t)^{d}\det A(x)}}\intl_{\cRd}
e^{-\frac{A^{-1}(x)(x-tB(x)-y)\cdot(x-tB(x)-y)}{4t}}\ffi(y)dy\\
&
\equiv e^{-tC(x)}\intl_{\cRd}e^{\frac{A^{-1}(x)B(x)\cdot(x-y)}{2}}e^{-t\frac{|A^{-1/2}(x)B(x)|^2}{4}}\ffi(y)p_A(t,x,y)dy,\nonumber
\end{align}
 where for all $x,y\in\cRd$
 \begin{equation}\label{1:eq:3:p_A}
p_A(t,x,y) :=\frac{1}{\sqrt{(4\pi t)^{d}\det A(x)}} \exp\bigg(-\frac{A^{-1}(x)(x-y)\cdot(x-y)}{4t}\bigg).
\end{equation}
 This   family $(F(t))_{t\geq0}$ is a strongly continuous family of contractions on $X$ which is Chernoff equivalent to the semigroup $(T_t)_{t\geq0}$.  Moreover, the corresponding semigroup $(T^o_t)_{t\geq0}$ can be approximated via the following Feynman formula:
\begin{align}\label{CDP:3:FF}
T^o_t\varphi(x_0)=& \liml_{n\to\infty} \intl_{ G^n}
\exp\bigg(-\frac{t}{n}\suml_{j=1}^n \left(C(x_{j-1})+\frac14\left|A^{-1/2}(x_{j-1})B(x_{j-1})\right|^2   \right)\bigg) \nonumber\\
&
\quad\times\exp\bigg(-\frac12\suml_{j=1}^n   A^{-1}(x_{j-1})B(x_{j-1})\cdot(x_j-x_{j-1})\bigg) \varphi(x_n)\nonumber \\
 &
\quad\times p_A(t/n,x_0,x_1) \cdots p_A(t/n, x_{n-1},x_n) dx_1\dots dx_n,
\end{align}
for each $\ffi\in Y$, each $x_0 \in G$ and each $t > 0$. The convergence in the Feynman formula~\eqref{CDP:3:FF} is locally uniform with respect to $x_0\in G$ and uniform with respect to  $t\in(0,t^*]$ for all $t^*>0$.
\end{corollary}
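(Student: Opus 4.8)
The plan is to obtain the corollary as a direct specialization of Proposition~\ref{prop:CDP-nonLoc} to the degenerate case $N(dy)\equiv0$, the only genuine computation being an elementary completion of the square. First I would identify the convolution semigroup: with $N(dy)\equiv0$ the exponent $r$ from the footnote of this subsection vanishes identically, so $\mathcal{F}[\eta_t]\equiv(2\pi)^{-d/2}$ for every $t\geq0$, whence $\eta_t=\delta_0$, the Dirac mass at the origin, as asserted. Substituting $\eta_t=\delta_0$ into \eqref{eq:F(t)-NonLocCP} collapses the outer $z$-integral to evaluation at $z=0$; since the quadratic form is invariant under the sign change $(-x+tB(x)+y)\mapsto(x-tB(x)-y)$, one recovers the first line of \eqref{1:eq:F^ABC+}. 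The strong continuity, the contraction property and the Chernoff equivalence to $(T_t)_{t\geq0}$ are then inherited verbatim from the general family \eqref{eq:F(t)-NonLocCP}, which was already stated to enjoy them (via Thm.~3.1 in \cite{MR2999096} and Remark~15 in \cite{MR3455669}), so no separate argument is required.

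Next I would carry out the algebraic rewriting yielding the second line of \eqref{1:eq:F^ABC+}. Writing $u:=x-y$ and expanding
\[
A^{-1}(x)(u-tB(x))\cdot(u-tB(x))=A^{-1}(x)u\cdot u-2t\,A^{-1}(x)B(x)\cdot u+t^2|A^{-1/2}(x)B(x)|^2,
\]
where the two cross terms are combined using the symmetry of $A^{-1}(x)$ and the last term is rewritten via $A^{-1}(x)B(x)\cdot B(x)=|A^{-1/2}(x)B(x)|^2$, and then dividing by $4t$, splits the Gaussian exponent into the three factors $p_A(t,x,y)$, $\exp\bigl(\tfrac12 A^{-1}(x)B(x)\cdot(x-y)\bigr)$ and $\exp\bigl(-\tfrac{t}{4}|A^{-1/2}(x)B(x)|^2\bigr)$ displayed in \eqref{1:eq:F^ABC+}. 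This identity is the one computational ingredient of the proof.

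Finally I would read off the Feynman formula \eqref{CDP:3:FF} from \eqref{eq:FF:CDP-nonLoc}. Setting $\eta_{t/n}=\delta_0$ there collapses each $z_k$-integration to $z_k=0$, so the $2n$-fold integral over $G^n\times\cR^{nd}$ reduces to an $n$-fold integral over $G^n$; applying the identity of the previous paragraph to each of the $n$ factors (with $x\mapsto x_{k-1}$, $y\mapsto x_k$, $t\mapsto t/n$) and collecting terms turns the product of Gaussian kernels into $\prod_{k=1}^n p_A(t/n,x_{k-1},x_k)$, the product of cross terms into $\exp\bigl(-\tfrac12\sum_{j=1}^n A^{-1}(x_{j-1})B(x_{j-1})\cdot(x_j-x_{j-1})\bigr)$ (the sign matching since $x_{j-1}-x_j=-(x_j-x_{j-1})$), and the remaining exponentials into the prefactor $\exp\bigl(-\tfrac{t}{n}\sum_{j=1}^n(C(x_{j-1})+\tfrac14|A^{-1/2}(x_{j-1})B(x_{j-1})|^2)\bigr)$. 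The two convergence statements, locally uniform in $x_0\in G$ and uniform in $t\in(0,t^*]$, transfer without change from Proposition~\ref{prop:CDP-nonLoc}.

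I do not expect a substantial obstacle here: the statement is a corollary whose content is essentially bookkeeping. The one point deserving care is the legitimacy of collapsing the $\delta_0$-integrations, which is immediate because each integrand is continuous in the $z$-variables; the rest is the completion of the square and the matching of signs in the drift term.
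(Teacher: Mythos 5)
Your proposal is correct and coincides with the paper's own treatment: the corollary is obtained there exactly as a specialization of Proposition~\ref{prop:CDP-nonLoc} to $N(dy)\equiv0$, with the $\delta_0$-integrations collapsed and the Gaussian exponent split by completing the square, the contraction/strong-continuity/Chernoff-equivalence claims being inherited from the general family~\eqref{eq:F(t)-NonLocCP}. Your sign bookkeeping in the drift term and the substitution $t\mapsto t/n$ in each factor are both right.
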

\begin{remark}
It is assumed in this Subsection that $C^\infty_c(\cRd)$ is a core for $L$. If $L$ is given by formula~\eqref{1:3:eq:L_ABC} with continuous and bounded coefficients $A$, $B$, $C$, then $C^\infty_c(\cRd)\subset C^{2,\alpha}_c(\cRd)\subset\Dom(L)$ and hence $C^{2,\alpha}_c(\cRd)$ is also a core for $L$. Therefore, one may consider $D:=C^{2,\alpha}_c(\cRd)$ (or any other bigger core for $L$) and find corresponding $D_o$ and $\mathcal{E}$ such that Assumption~\ref{CDP:2:ass:E+D_0} holds. In principle, the bigger $D$ is chosen, the easier is to find  $\mathcal{E}$  satisfying the condition~(v) of Assumption~\ref{CDP:2:ass:E+D_0} that $\mathcal{E}(D_o)\subset D$.  As it has been shown in \cite{MR2729591}, the family  $(F(t))_{t\geq0}$ given in~\eqref{1:eq:F^ABC+} satisfies the condition
$$
\lim_{t\to0}\left\|\frac{F(t)\ffi-\ffi}{t}-L\ffi\right\|_X=0\quad\text{ for all }\,\,\ffi\in C^{2,\alpha}_c(\cRd),\,\alpha\in(0,1).
$$
Therefore, it is sufficient  to assume that $C^{2,\alpha}_c(\cRd)$ is a core for $L$ in Corollary~\ref{cor:example:CDP-BGS}.
\end{remark}

\section{Approximation of solutions and  Feynman formulae for  time-fractional Fokker--Planck--Kolmogorov equations}\label{Section:Appl_3}

\subsection{Approximation of solutions  for distributed order  time-fractional Fok\-ker--Planck--Kolmogorov equations}
We are interested now in distributed order  time-fractional evolution equations of the form
\begin{align}\label{Appl:3:eq:fracFPK}
\mathcal{D}^\mu f(t,x)=Lf(t,x),
\end{align}
 where $\mathcal{D}^\mu$ is the distributed order fractional derivative with respect to the time variable $t$ and $L$ is the generator of a strongly continuous semigroup $(T_t)_{t\geq0}$ on some Banach space $(X,\|\cdot\|_X)$ of functions of the space variable $x$.  Equations of such type are called \emph{time-fractional Fokker--Planck--Kolmogorov equations} and arise in the framework of continuous time random walks (CTRWs) and fractional kinetic theory (\cite{MR0260036,MR757002,MR1809268,MR1937584}). 
As it is shown in papers \cite{MR2782245,MR2886388,MR3168478} (see also papers \cite{MR1656314,MR1874479,MR2442372,MR2179231,MR3049524} for the case $\mu=\delta_{\beta_0}$, $\beta_0\in(0,1)$),  such time-fractional Fokker--Planck--Kolmogorov equations are governing equations for stochastic processes which are  weak limits of certain sequences or triangular arrays of CTRWs. These limit processes are actually time-changed L\'{e}vy processes, where the time-change arises as the first hitting time of level $t>0$ (or, equivalently, as the inverse process) for a  mixture of independent stable subordinators with some  mixing measure $\mu$.

Recall that a process $(D^\beta_t)_{t\geq0}$ with $\beta\in(0,1)$ is  \emph{$\beta$-stable subordinator} if it is a one-dimensional L\'{e}vy proces with  almost surely non-decreasing paths  such that the corresponding Bernstein function is $f(s):=s^\beta$ (see, e.g.,  Section 3.9 of~\cite{MR1873235} for the definitions).   For a given finite Borel measure $\mu$ with $\supp\mu\in(0,1)$, consider the function $f^\mu$ given by
$$
f^\mu(s):=\intl_0^1 s^\beta\mu(d\beta), \quad s>0.
$$
It is a Bernstein function  given by (cf.  formula (3.246) of~\cite{MR1873235}):
$$
f^\mu(s)=\intl_{0+}^\infty \left(1-e^{-ts}\right)m(dt),\quad\text{where}\quad m(dt):=\left(\intl_0^1\frac{\beta t^{-\beta-1}}{\Gamma(1-\beta)}\mu(d\beta)\right)dt.
$$
Let $(D^\mu_t)_{t\geq0}$ be a subordinator corresponding to the Bernstein function $f^\mu$.   This process represents a mixture of independent stable subordinators with a mixing measure $\mu$. Define now the process $(E^\mu_t)_{t\geq0}$
by
$$
E^\mu_t:=\inf\left\{\tau\geq0\,:\,D^\mu_\tau>t \right\}.
$$
The process $(E^\mu_t)_{t\geq0}$ is the first hitting time of the level $t$ of the process $(D^\mu_\tau)_{\tau\geq0}$ or, equivalently, the inverse to $(D^\mu_t)_{t\geq0}$. This process $(E^\mu_t)_{t\geq0}$  is sometimes called \emph{inverse subordinator}. However,  note that it is not a Markov process. It is known that $(E^\mu_t)_{t\geq0}$ possesses a marginal density function $p^\mu(t,x)$, i.e. $\mathbb{P}(E^\mu_t\in B)=\int_B p^\mu(t,x)dx$ for all $B\in\mathcal{B}(\cR)$, and $p^\mu(t,x)=0$ for all $x<0$.  The marginal density function $p^\mu(t,x)$ has many nice properties (see Lemma~2.4 and Lemma~2.5 in \cite{MR2736349}; for the case  $\mu=\delta_{\beta_0}$, $\beta_0\in(0,1)$, see also \cite{MR2074812,MR2726092}).  In particular, $p^\mu\in C^\infty((0,\infty)\times(0,\infty))$. 
In the sequel, we need the following simple property of $p^\mu$:

\begin{lemma}\label{Appl:3:le}
For each $\eps>0$ and each $T>0$  there exists  $R_{T,\eps}>0$ such that for all $t\in[0,T]$ holds
\begin{align*}
\intl_{R_{T,\eps}}^\infty p^\mu(t,x)dx<\eps.
\end{align*}
\end{lemma}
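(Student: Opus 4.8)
The plan is to interpret the tail integral probabilistically and reduce the statement, uniformly in $t$, to a tail estimate for the subordinator $(D^\mu_\tau)_{\tau\geq0}$, whose Laplace transform is explicit. Since $\intl_{R}^\infty p^\mu(t,x)\,dx=\p(E^\mu_t>R)$, I would first exploit the defining duality $E^\mu_t=\inf\{\tau\geq0 : D^\mu_\tau>t\}$. Because the paths of $(D^\mu_\tau)_{\tau\geq0}$ are almost surely non-decreasing, the event $\{E^\mu_t>R\}$ is contained in $\{D^\mu_R\leq t\}$: if $D^\mu_\tau\leq t$ for all $\tau\leq R$, then in particular $D^\mu_R\leq t$. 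Hence $\p(E^\mu_t>R)\leq\p(D^\mu_R\leq t)$, and the right-hand side is non-decreasing in $t$, so for every $t\in[0,T]$ one has $\intl_{R}^\infty p^\mu(t,x)\,dx\leq\p(D^\mu_R\leq T)$. This single bound is already uniform in $t\in[0,T]$, which is the crux of the uniformity claim.

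It then remains to show that $\p(D^\mu_R\leq T)$ can be made smaller than $\eps$ by choosing $R$ large. Here I would use the Laplace transform $\E[e^{-s D^\mu_R}]=e^{-R f^\mu(s)}$, valid for all $s>0$ by definition of the Bernstein function $f^\mu$ associated with $(D^\mu_\tau)_{\tau\geq0}$. Fixing any $s>0$ and applying Markov's inequality to the non-negative random variable $e^{-s D^\mu_R}$ gives $\p(D^\mu_R\leq T)=\p\big(e^{-s D^\mu_R}\geq e^{-sT}\big)\leq e^{sT}\E[e^{-s D^\mu_R}]=e^{sT-R f^\mu(s)}$. Because $\mu(0,1)>0$, we have $f^\mu(s)=\intl_0^1 s^\beta\,\mu(d\beta)>0$ for every $s>0$; taking, say, $s=1$ and $R_{T,\eps}:=(T-\ln\eps)/f^\mu(1)$ (or any larger value) yields $e^{T-R_{T,\eps}f^\mu(1)}\leq\eps$, which completes the estimate.

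The main thing to verify carefully is the path-wise duality inequality together with the boundary behaviour at $t=0$. The inclusion $\{E^\mu_t>R\}\subset\{D^\mu_R\leq t\}$ should be justified from the right-continuity and monotonicity of the subordinator paths; the precise strict-versus-nonstrict form of the inequality is immaterial, since we only need an upper bound and the absolutely continuous law of $E^\mu_t$ for $t>0$ renders boundary sets negligible in any case. The value $t=0$ causes no difficulty: it is subsumed by the uniform bound $\p(D^\mu_R\leq T)$, and in fact $D^\mu_R>0$ almost surely for $R>0$ because $f^\mu(s)\to\infty$ as $s\to\infty$ forces infinite jump activity. I do not expect any genuinely hard step; the only point requiring attention is organising the reduction so that the chosen $R_{T,\eps}$ depends on $T$ and $\eps$ but not on the particular $t\in[0,T]$, which the monotonicity in $t$ secures.
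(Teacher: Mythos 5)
Your proof is correct and follows essentially the same route as the paper: both arguments rest on the identity $\intl_{R}^\infty p^\mu(t,x)\,dx=\PP(E^\mu_t\geq R)=\PP(D^\mu_R\leq t)$ and the monotonicity of $t\mapsto\PP(D^\mu_R\leq t)$ to reduce the uniform-in-$t$ claim to the single value $t=T$. The only difference is the final step, where the paper simply translates back to $\PP(E^\mu_T\geq R)=\intl_{R}^\infty p^\mu(T,x)\,dx$ and uses that $p^\mu(T,\cdot)$ is a probability density, whereas you obtain the (unnecessary but harmless) quantitative bound $\PP(D^\mu_R\leq T)\leq e^{sT-Rf^\mu(s)}$ via Markov's inequality, which additionally yields an explicit admissible choice of $R_{T,\eps}$.
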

 
\begin{proof}
Choose arbitrary $\eps>0$ and  $T>0$.  Consider $R>0$.  We have for all $t\in[0,T]$:
\begin{align*}
\intl_{R}^\infty p^\mu(t,x)dx&=\PP\left(E^\mu_t\geq R\right)=\PP\left(D^\mu_R\leq t\right)\\
&
\leq \PP\left(D^\mu_R\leq T\right)=\PP\left(E^\mu_T\geq R\right)=\intl_{R}^\infty p^\mu(T,x)dx\\
&
<\eps
\end{align*}
for  sufficiently large $R$ since $\int_0^\infty p^\mu(T,x)dx=1$.
\end{proof}

The marginal density function $p^\mu(t,x)$ allows to represent solutions of Cauchy problems for distributed order time-fractional evolution equations of the form \eqref{Appl:3:eq:fracFPK} in the following way (cf. Thm.~3.2 in \cite{MR2886388} and Thm.~4.2 in \cite{MR3168478}):

\begin{proposition}\label{Appl:3:prop}
Let $(X,\|\cdot\|_X)$ be a Banach space. Let $(L,\Dom(L))$ be the generator of a uniformly bounded\footnote{This means that $\|T_t\|\leq M$ for some $M>0$ and all $t\geq0$.}, strongly continuous semigroup $(T_t)_{t\geq0}$ on $X$. Let $f_0\in\Dom(L)$. Let $\mu$ be a finite Borel measure with $\supp\mu\in(0,1)$.  Then the family of linear operators $(\mathcal{T}_t)_{t\geq0}$ from $X$ into $X$ given by
\begin{align}\label{Appl:3:eq:Tt-subordinated}
\mathcal{T}_t\ffi:=\intl_0^\infty T_\tau\ffi\, p^\mu(t,\tau)\,d\tau,\quad\forall\,\ffi\in X,
\end{align} 
is uniformly bounded and strongly analytic in a sectorial region. Furthermore, the family $(\mathcal{T}_t)_{t\geq0}$ is strongly continuous  and the function $f(t):=\mathcal{T}_t f_0$ is a solution of the Cauchy problem
\begin{align}\label{Appl:3:eq:CP}
&\mathcal{D}^\mu f(t)=Lf(t),\quad t>0,\nonumber\\
&
f(0)=f_0.
\end{align}
\end{proposition}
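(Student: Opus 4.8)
The plan is to pass to the Laplace transform in $t$, where the non-local operator $\mathcal{D}^\mu$ becomes multiplication by the Bernstein symbol $f^\mu$, and to exploit the resolvent formula for $(T_t)_{t\geq0}$. The uniform boundedness is immediate: since $p^\mu(t,\cdot)$ is a probability density on $(0,\infty)$ and $\|T_\tau\|\leq M$, one has
$$
\|\mathcal{T}_t\ffi\|_X\leq \intl_0^\infty \|T_\tau\ffi\|_X\,p^\mu(t,\tau)\,d\tau\leq M\|\ffi\|_X ,
$$
so $\|\mathcal{T}_t\|\leq M$ for all $t\geq0$. The engine of the argument is the subordination identity
$$
\intl_0^\infty e^{-st}p^\mu(t,\tau)\,dt=\frac{f^\mu(s)}{s}\,e^{-\tau f^\mu(s)},\qquad s>0,\ \tau>0 ,
$$
which I would derive from $\{E^\mu_t\leq\tau\}=\{D^\mu_\tau>t\}$: writing $G(t,\tau):=\PP(D^\mu_\tau>t)$ and combining $\E[e^{-sD^\mu_\tau}]=e^{-\tau f^\mu(s)}$ with $\intl_0^\infty e^{-st}\PP(X>t)\,dt=s^{-1}(1-\E e^{-sX})$ for $X\geq0$, one gets $\intl_0^\infty e^{-st}G(t,\tau)\,dt=s^{-1}(1-e^{-\tau f^\mu(s)})$; differentiating in $\tau$ (legitimate by the $C^\infty$-regularity of $p^\mu$ noted above) yields the claimed formula. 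Alternatively one may cite Lemma~2.4 and Lemma~2.5 of \cite{MR2736349}.

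With this in hand I would compute the Laplace transform of $f(t):=\mathcal{T}_tf_0$. Since $f^\mu$ is a Bernstein function it maps the right half-plane $\{\Re s>0\}$ into itself (indeed $\Re(1-e^{-\lambda t})>0$ for $t>0$, $\Re\lambda>0$), so $\Re f^\mu(s)>0$ there and $(f^\mu(s)-L)^{-1}=\intl_0^\infty e^{-\tau f^\mu(s)}T_\tau\,d\tau$ is a bounded operator. Fubini's theorem (justified by $\|T_\tau\|\leq M$ and integrability of $p^\mu$) then gives
$$
\intl_0^\infty e^{-st}f(t)\,dt=\intl_0^\infty T_\tau f_0\Big(\intl_0^\infty e^{-st}p^\mu(t,\tau)\,dt\Big)d\tau=\frac{f^\mu(s)}{s}\,(f^\mu(s)-L)^{-1}f_0=:\widetilde f(s).
$$
On the other hand, using $\mathcal{L}[\mathcal{D}^\mu u](s)=f^\mu(s)U(s)-\tfrac{f^\mu(s)}{s}u(0+)$ (obtained by integrating the Laplace transform $s^\beta U(s)-s^{\beta-1}u(0+)$ of the Caputo derivative against $\mu$), the Cauchy problem \eqref{Appl:3:eq:CP} transforms into $(f^\mu(s)-L)\widetilde f(s)=\tfrac{f^\mu(s)}{s}f_0$, whose unique solution is exactly the $\widetilde f(s)$ above. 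As $f_0\in\Dom(L)$ makes both sides genuinely defined, uniqueness of the vector-valued Laplace transform then yields that $f(t)=\mathcal{T}_tf_0$ solves \eqref{Appl:3:eq:CP}.

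It remains to treat strong continuity and analyticity. For strong continuity at $t_0>0$ I would combine the smoothness of $p^\mu$ on $(0,\infty)\times(0,\infty)$ with dominated convergence, the tail bound of Lemma~\ref{Appl:3:le} furnishing control uniform in $t$ near $t_0$; at $t_0=0$ the measures $p^\mu(t,\cdot)\,d\tau$ concentrate at $\tau=0$ (since $E^\mu_0=0$ a.s.), so $\mathcal{T}_t\ffi\to T_0\ffi=\ffi$ by strong continuity of $(T_\tau)$ at the origin together with Lemma~\ref{Appl:3:le}. For the strong analyticity in a sectorial region I would continue $t\mapsto p^\mu(t,\tau)$ analytically to a complex sector with uniform, $\tau$-integrable bounds, so that $\mathcal{T}_t\ffi=\intl_0^\infty T_\tau\ffi\,p^\mu(t,\tau)\,d\tau$ becomes an $X$-valued analytic function by Morera together with Fubini; equivalently one derives sectorial resolvent estimates for $s\mapsto\widetilde f(s)$ and inverts.

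The \emph{main obstacle} is precisely this last point: carrying out the analytic continuation of the mixing density $p^\mu$ into a complex sector and establishing the uniform integrable estimates needed to differentiate under the integral sign (equivalently, the sectorial bounds on $\tfrac{f^\mu(s)}{s}(f^\mu(s)-L)^{-1}$). The uniform boundedness, the verification of the equation, and the strong continuity are comparatively routine once the subordination identity and the smoothness and tail properties of $p^\mu$ are available; the delicate analytic estimates on $p^\mu$ are the technical heart, and this is where I would lean on the cited results of \cite{MR2736349,MR2886388,MR3168478}.
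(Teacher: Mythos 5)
The paper itself offers no proof of this proposition: it is imported by reference, ``cf.\ Thm.~3.2 in \cite{MR2886388} and Thm.~4.2 in \cite{MR3168478}'', so there is no in-paper argument to compare yours against line by line. That said, your Laplace-transform route is exactly the standard one underlying those references: the identity $\int_0^\infty e^{-st}p^\mu(t,\tau)\,dt=\frac{f^\mu(s)}{s}e^{-\tau f^\mu(s)}$ (your derivation from $\{E^\mu_t\leq\tau\}=\{D^\mu_\tau>t\}$ is correct up to the usual null-set care with $>$ versus $\geq$), the computation $\widetilde f(s)=\frac{f^\mu(s)}{s}(f^\mu(s)-L)^{-1}f_0$, and the matching with the transform of $\mathcal{D}^\mu$. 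The uniform boundedness argument is correct as stated.

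The one genuine gap — which, to your credit, you name yourself — is that the Laplace-transform verification as written is only half an argument. It shows that \emph{if} $f(t)=\mathcal{T}_tf_0$ has a well-defined $\mathcal{D}^\mu f$ whose transform is $f^\mu(s)\widetilde f(s)-\frac{f^\mu(s)}{s}f_0$, then the transformed equation holds; but to close the loop one must establish a priori that $t\mapsto\mathcal{T}_tf_0$ is regular enough (absolutely continuous, or at least with a well-defined Riemann--Liouville integral of the required order, and $\Dom(L)$-valued so that $L\mathcal{T}_tf_0=\mathcal{T}_tLf_0$ makes sense) for the Caputo/distributed-order derivative to exist and for its transform to be computed by that formula. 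This regularity is precisely what the sectorial analyticity delivers, and the analytic continuation of $p^\mu(\cdot,\tau)$ with $\tau$-uniform integrable bounds is the technical heart that you defer to \cite{MR2736349,MR2886388,MR3168478}. Since the paper defers the entire proposition to the same sources, your proposal is an honest reconstruction of the intended argument rather than a complete self-contained proof; just be aware that ``uniqueness of the vector-valued Laplace transform'' cannot be invoked until the existence and transformability of $\mathcal{D}^\mu f$ have been secured.
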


This result shows that solutions of time-fractional evolution equations are a kind of subordination of  solutions of the corresponding time-non-fractional evolution equations with  respect to ``subordinators'' $(E^\mu_t)_{t\geq0}$. And  respectively,  if a time-non-fractional evolution equation is a governing equation for a  Markov process then the related time-fractional evolution equation is a governing equation for a (already non-markovian) process which is a ``subordination'', i.e.  a time-change of this Markov process by means of $(E^\mu_t)_{t\geq0}$. The non-Markovity of the resulting process corresponds  to the fact that the family $(\mathcal{T}_t)_{t\geq0}$ is not a semigroup any more.   Note also that some other types of  time-fractional evolution equations have a similar ``subordination-like'' structure of solutions (see \cite{MR2964432,MR2588003}). 

Assume now that the semigroup $(T_t)_{t\geq0}$ is not known explicitly but is already Chernoff approximated. We have no chances to construct Chernoff approximations for the family $(\mathcal{T}_t)_{t\geq0}$ which is not a semigroup.  Nevertheless, the following is true.

\begin{theorem}\label{Appl:3:thm}
Let $(X,\|\cdot\|_X)$ be a Banach space. Let $(L,\Dom(L))$ be the generator of a  strongly continuous contraction semigroup $(T_t)_{t\geq0}$ on $X$. Let the family $(F(t))_{t\geq0}$ of contractions on $X$ be strongly Borel measurable\footnote{I.e., for  all $\ffi\in X$, the mappings $t\mapsto\| F(t)\ffi\|_X$  from $[0,\infty)$ to $\cR$ are Borel measurable. This is fulfilled if, e.g., the family $(F(t))_{t\geq0}$ is strongly continuous. And if the family $(F(t))_{t\geq0}$ of contractions on $X$ is such that all the mappings $t\mapsto\| F(t)\ffi\|_X$, $\ffi\in X$, are Borel measurable, then  Bochner integrals in~\eqref{eq:fn} are well-defined and finite.} and Chernoff equivalent to $(T_t)_{t\geq0}$. Let $f_0\in\Dom(L)$. Let $\mu$ be a finite Borel measure with $\supp\mu\in(0,1)$ and the family $(\mathcal{T}_t)_{t\geq0}$ be given by formula~\eqref{Appl:3:eq:Tt-subordinated}. Let $f\,:\,[0,\infty)\to X$ be defined via $f(t):=\mathcal{T}_t f_0$.  For each $n\in\Nat$  define  the mappings  $f_n\,:\,[0,\infty)\to X$  by
\begin{align}\label{eq:fn}
f_n(t):=\intl_0^\infty F^n(\tau/n)f_0\,p^\mu(t,\tau)\,d\tau.
\end{align}
 Then it holds  locally uniformly with respect to  $t\geq0$ that
$$
\|f_n(t)-f(t)\|_X\to0,\quad n\to\infty.
$$
\end{theorem}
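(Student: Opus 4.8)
The plan is to represent the difference $f_n(t)-f(t)$ as a single Bochner integral over $\tau\in[0,\infty)$ and then to split the $\tau$-integration into a compact part, on which the Chernoff convergence is uniform, and a tail, on which both integrands are controlled by the contraction bound while the density carries uniformly small mass.

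First I would invoke the hypothesis that $(F(t))_{t\geq0}$ is Chernoff equivalent to $(T_t)_{t\geq0}$: by the Chernoff Theorem~\ref{0:thm:Chernoff-my}, for every $\ffi\in X$ one has $[F(\tau/n)]^n\ffi\to T_\tau\ffi$ locally uniformly in $\tau\geq0$, so applying this to $\ffi=f_0$ gives $F^n(\tau/n)f_0\to T_\tau f_0$ uniformly on every compact $\tau$-interval. Since both $f_n(t)$ and $f(t)=\mathcal{T}_tf_0$ are well-defined Bochner integrals (by Proposition~\ref{Appl:3:prop} and the measurability hypothesis, cf. the footnote to~\eqref{eq:fn}), linearity of the integral yields
\begin{equation*}
f_n(t)-f(t)=\intl_0^\infty\bigl(F^n(\tau/n)f_0-T_\tau f_0\bigr)\,p^\mu(t,\tau)\,d\tau .
\end{equation*}

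Next I would fix $T>0$ and $\eps>0$ and estimate the norm of this integral uniformly for $t\in[0,T]$, passing the norm inside the integral. Because each $F(\tau/n)$ and each $T_\tau$ is a contraction, the integrand satisfies $\|F^n(\tau/n)f_0-T_\tau f_0\|_X\leq2\|f_0\|_X$ for all $\tau$ and all $n$. Hence, using that the measure $p^\mu(t,\cdot)\,d\tau$ has total mass at most one together with Lemma~\ref{Appl:3:le}, I would pick $R>0$ so that $\intl_R^\infty p^\mu(t,\tau)\,d\tau<\eps/(4\|f_0\|_X+1)$ for all $t\in[0,T]$; the tail contribution is then bounded by $2\|f_0\|_X$ times this quantity, hence by $\eps/2$, uniformly in $t\in[0,T]$ and in $n$. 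On the compact interval $[0,R]$ the locally uniform Chernoff convergence supplies an index $N$ with $\|F^n(\tau/n)f_0-T_\tau f_0\|_X<\eps/2$ for all $\tau\in[0,R]$ and $n\geq N$; since $\intl_0^R p^\mu(t,\tau)\,d\tau\leq1$, the compact-part contribution is then at most $\eps/2$ for $n\geq N$, again uniformly in $t$. Adding the two estimates gives $\|f_n(t)-f(t)\|_X<\eps$ for all $n\geq N$ and all $t\in[0,T]$, which is exactly locally uniform convergence.

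The only genuine difficulty is obtaining uniformity in $t$ over the tail of the $\tau$-integral: the Chernoff theorem alone supplies uniformity only on compact $\tau$-intervals, so the unbounded range of $\tau$ must be tamed by a separate mechanism. This is precisely the role of Lemma~\ref{Appl:3:le}, which yields a tail bound for $p^\mu(t,\cdot)$ that is uniform in $t\in[0,T]$; combined with the contraction estimate $\|F^n(\tau/n)f_0-T_\tau f_0\|_X\leq2\|f_0\|_X$, it renders the large-$\tau$ contribution negligible independently of both $n$ and $t$. I do not expect the measurability questions to be an obstacle, since well-definedness of the Bochner integral~\eqref{eq:fn} is already granted among the hypotheses, and the argument uses $f_0\in\Dom(L)$ only through $f_0\in X$.
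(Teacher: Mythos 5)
Your proposal is correct and follows essentially the same route as the paper's proof: both split the $\tau$-integral at a radius $R_{T,\eps}$ supplied by Lemma~\ref{Appl:3:le}, control the tail by the contraction bound $\|F^n(\tau/n)f_0-T_\tau f_0\|_X\leq2\|f_0\|_X$, and dispose of the compact part via the locally uniform convergence in the Chernoff theorem. The only cosmetic difference is that you fix an explicit $N$ to get $\eps/2+\eps/2$, whereas the paper lets $n\to\infty$ and concludes from the arbitrariness of $\eps$; the substance is identical.
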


\begin{proof}
Take any $T>0$ and any $\eps>0$. Due to Lemma~\ref{Appl:3:le}, there exists $R_{T,\eps}>0$ such that 
$$
\intl_{R_{T,\eps}}^\infty p^\mu(t,\tau)d\tau<\eps
$$
for all $t\in[0,T]$. Then it holds for $t\in[0,T]$
\begin{align*}
\|f_n&(t)-f(t)\|_X=\left\|\intl_0^\infty F^n(\tau/n)f_0\,p^\mu(t,\tau)\,d\tau-\intl_0^\infty T_\tau f_0\, p^\mu(t,\tau)\,d\tau\right\|_X\\
&
\leq \intl_0^\infty \|T_\tau f_0-F^n(\tau/n) f_0\|_X\, p^\mu(t,\tau)\,d\tau\\
&
\leq \intl_0^{R_{T,\eps}} \|T_\tau f_0-F^n(\tau/n) f_0\|_X\, p^\mu(t,\tau)\,d\tau +\intl_{R_{T,\eps}}^\infty \|T_\tau f_0-F^n(\tau/n) f_0\|_X\, p^\mu(t,\tau)\,d\tau\\
&
\leq \supl_{\tau\in[0,R_{T,\eps}]} \|T_\tau f_0-F^n(\tau/n) f_0\|_X\intl_0^{R_{T,\eps}}p^\mu(t,\tau)\,d\tau+2\eps\|f_0\|_X\\
&
\to 2\eps\|f_0\|_X,\quad n\to\infty,
\end{align*}
due to the fact that  the convergence in the Chernof theorem is locally uniform with respect to the time variable. Since $\eps>0$ was chosen arbitrary, the statement follows.
\end{proof}

\begin{remark}
Consider a time-fractional Fokker--Planck--Kolmogorov equation of the form~\eqref{Appl:3:eq:fracFPK}. Assume that the semigroup $(T_t)_{t\geq0}$, whose generator $L$ stands in the right hand side of the equation, corresponds to a Markov process $(\xi(t))_{t\geq0}$. Then this time-fractional Fokker--Planck--Kolmogorov equation is a governing equation for the stochastic process $(\xi(E^\mu_t))_{t\geq0}$  which is the  time-change  of $(\xi(t))_{t\geq0}$ by means of  the inverse subordinator $(E^\mu_t)_{t\geq0}$. And  the function 
\begin{align}\label{Appl:3:eq:f-FKF}
f(t,x):=\E\left[f_0\left(\xi\left(E^\mu_t\right)\right)\,\,|\,\,\xi(E^\mu_0)=x\right]
\end{align}
solves the Cauchy problem~\eqref{Appl:3:eq:CP} (cf. Theorem~3.6 in \cite{MR2886388}, see also \cite{MR1874479,MR2074812}). 
Since the process $(\xi(E^\mu_t))_{t\geq0}$ is not Markov,  its marginal density function (together with the initial distribution) does not determine all finite dimensional distributions of the process.  And there exist many different processes with the same marginal density function. Hence there exist many other stochastic representations for the function $f(t,x)$  in formula~\eqref{Appl:3:eq:f-FKF} (see, e.g., Thm~3.3 in \cite{MR3413862}, Cor.~3.4 in~\cite{MR2491905} and results of  \cite{MR2489164}). Furthermore, the considered  time-fractional Fokker--Planck--Kolmogorov equations (with $\mu=\delta_{\beta_0}$, $\beta_0\in(0,1)$) are related to some time-non-fractional evolution equations of hihger order (see, e.g., \cite{MR2491905,MR2965747}). Therefore, the approximations  $f_n$ constructed in Theorem~\ref{Appl:3:thm}   can be used simultaneousely to  approximate  path integrals appearing in different sto\-chas\-tic representations of the same function $f(t,x)$ and to  approximate  solutions of corresponding time-non-fractional evolution equations of hihger order.
\end{remark}

\begin{remark}\label{Appl:3:rem:CompareWithChapter4}
Obviousely, approximations $f_n$ similar to those of Theorem~\ref{Appl:3:thm}  can be constructed also for subordinate semigroups discussed in Section~3.1  of \cite{ChApprSubSem}. Na\-me\-ly, assume that
a semigroup $(T^f_t)_{t\geq 0}$ is subordinate   to a given semigroup $(T_t)_{t\geq 0}$ on a Banach space $(X,\|\cdot\|_X)$ with respect to a given convolution semigroup $(\eta_t)_{t\geq 0}$ associated to a Bernstein function $f$, i.e.  $T^f_t\ffi=\int_0^\infty T_s\ffi\,\eta_t(ds)$ for all $\ffi\in X$; the convolution semigroup $(\eta_t)_{t\geq 0}$ is known explicitly; and a given  family $(F(t))_{t\geq 0}$ (of contractions) is Chernoff equivalent to $(T_t)_{t\geq 0}$ and strongly Borel measurable. Then, similarly to the proof of Theorem~\ref{Appl:3:thm}, one shows that the functions $f_n(t)$,
\begin{align}\label{Appl:3:eq:f_n-Subord}
f_n(t):=\intl_0^\infty F^n(s/n)f_0\,\eta_t(ds),
\end{align}
approximate the function $T^f_t f_0$ in the norm $\|\cdot\|_X$ locally uniformly with respect to $t\geq0$ for all $f_0\in \Dom(L)$. Note that such approximations $f_n$ are much simpler than Chernoff approximations  based on the family $(\mathcal{F}(t))_{t\geq 0}$ constructed in Theorem~3.1 of \cite{ChApprSubSem}.  However,  the Chernoff approximations, based on the  families $(\mathcal{F}(t))_{t\geq 0}$ which are presented in Theorem~3.1 of \cite{ChApprSubSem}, can be used as a building block for further purposes. First,  families $(\mathcal{F}(t))_{t\geq 0}$ can be used to obtain Chernoff approximations for  semigroups, constructed by several iterative procedures of subordination, killing of an underlying process upon leaving a given domain, additive and multiplicative perturbations (of generators)  of some original semigroups. Second,  such Chernoff approximations, in turn, can be used to obtain approximations for solutions of the corresponding time-fractional evolution equations. Whereas the approximations $f_n$  in formula~\eqref{Appl:3:eq:f_n-Subord}  can be used only to approximate  $T^f_tf_0$ and not for further purposes.  
\end{remark}

 \subsection{Feynman formula solving the Cauchy problem for a class of   time-fractional diffusion equations}\label{Appl:3:example}

Let us  return to  the situation described in  Corollary~\ref{cor:example:CDP-BGS}. 
So, let $(T_t)_{t\geq0}$ be  a Feller semigroup\footnote{Hence all $T_t$ are contractions.}  whose generator $(L,\Dom(L))$ is given for all $\ffi\in C^2_0(\cRd)$ by formula~\eqref{1:3:eq:L_ABC}. Let coefficients $A$, $B$, $C$ be bounded and continuous and \eqref{eq:CP:uniformEllipt} hold.  Let $C^{2,\alpha}_c(\cRd)$ be a core for $(L,\Dom(L))$.
 Consider the  family $(F(t))_{t\geq0}$ of bounded linear operators on $X$ given by formula~\eqref{1:eq:F^ABC+}. Under the  assumptions above, this family is strongly continuous and Chernoff equivalent to the semigroup $(T_t)_{t\geq0}$. And all operators $F(t)$ are contractions since $C(x)\geq0$ for all $x\in\cRd$.
Let $f_0\in\Dom(L)$. Due to Proposition~\ref{Appl:3:prop}, the function $$f(t,x):=\intl_0^\infty  T_\tau f_0(x)\, p^{\mu}(t,\tau)\,d\tau$$ solves the Cauchy problem for the distributed order  time-fractional diffusion equation 
\begin{align}\label{eq:fracDiff-CP}
\mathcal{D}^\mu f(t,x)=-C(x)f(t,x) - B(x)\cdot\nabla  f(t,x)+ \tr(A(x)\Hess f(t,x)) 
\end{align}
with the initial condition $f(0,x)=f_0(x)$. Due to Theorem~\ref{Appl:3:thm}, the following functions $f_n(t,x)$ approximate the solution  $f(t,x)$ in supremum norm with respect to $x\in\cRd$  uniformly with respect to $t\in(0,t^*]$ for all $t^*>0$ as $n\to\infty$:
\begin{align*}
f_n(t,x_0):=\intl_{0}^{\infty}&
\intl_{\cR^{nd}} e^{-\frac{\tau}{n}\suml_{j=1}^n \left(C(x_{j-1})+\frac14\left|A^{-1/2}(x_{j-1})B(x_{j-1})\right|^2   \right) } e^{\frac12\suml_{j=1}^n   A^{-1}(x_{j-1})B(x_{j-1})\cdot(x_{j-1}-x_{j})} 
\nonumber\\
&
 \times
 p_A(\tau/n, x_0,x_1) \cdots p_A(\tau/n, x_{n-1},x_n) f_0(x_n)p^{\mu}(t,\tau)dx_1\ldots dx_{n}d\tau.
\end{align*}
Since for each $x_0\in\cRd$ the solution $f(t,x_0)$ is the limit of $f_n(t,x_0)$, i.e. the limit of  $(n+1)-$fold  iterated  integrals  as $n\to\infty$, the approximations  $f_n(t,x_0)$ provide us  just a Feynman formula for $f(t,x_0)$. Namely, the following statement holds.

\begin{proposition}
Under assumptions of this Subsection, the function $f(t,x_0)$, given by the Feynman formula~\eqref{FF:fracCP} below, solves the Cauchy problem for the distributed order time-fractional diffusion equation~\eqref{eq:fracDiff-CP} with the initial condition $f_0$.
\begin{align}\label{FF:fracCP}
&f(t,x_0)\nonumber\\
&=\liml_{n\to\infty}\intl_{0}^{\infty}
\intl_{\cR^{nd}} e^{-\frac{\tau}{n}\suml_{j=1}^n \left(C(x_{j-1})+\frac14\left|A^{-1/2}(x_{j-1})B(x_{j-1})\right|^2   \right) } e^{\frac12\suml_{j=1}^n   A^{-1}(x_{j-1})B(x_{j-1})\cdot(x_{j-1}-x_{j})} 
\nonumber\\
&
\quad\quad\quad\quad\quad\quad \times
  p_A(\tau/n, x_0,x_1) \cdots p_A(\tau/n, x_{n-1},x_n) f_0(x_n)p^{\mu}(t,\tau)dx_1\ldots dx_{n}d\tau,
\end{align}
where $p_A$ is given by~\eqref{1:eq:3:p_A}. And the convergence is  uniform with respect to $x_0\in\cRd$ and with respect to $t\in(0,t^*]$ for all $t^*>0$.
\end{proposition}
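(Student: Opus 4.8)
The plan is to assemble the statement from the three results already established in this subsection, so that essentially no new estimate is required. First I would observe that, for each fixed $n$, the function inside the limit in the Feynman formula~\eqref{FF:fracCP} is exactly the function $f_n(t,x_0)=\intl_0^\infty F^n(\tau/n)f_0(x_0)\,p^\mu(t,\tau)\,d\tau$ of~\eqref{eq:fn}, where $(F(t))_{t\geq0}$ is the family~\eqref{1:eq:F^ABC+} from Corollary~\ref{cor:example:CDP-BGS}. Hence the Feynman formula is nothing but the limit $\liml_{n\to\infty}f_n(t,x_0)$, and the whole task splits into (a) identifying the integrand as $F^n(\tau/n)f_0$ subordinated against $p^\mu$, and (b) invoking Theorem~\ref{Appl:3:thm} together with Proposition~\ref{Appl:3:prop} to pass to the limit and recognise the solution.

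For the solution side I would apply Proposition~\ref{Appl:3:prop}. The semigroup $(T_t)_{t\geq0}$ is a Feller semigroup, hence a contraction semigroup and in particular uniformly bounded; the generator $(L,\Dom(L))$ is of the required form~\eqref{1:3:eq:L_ABC}; we have $f_0\in\Dom(L)$; and $\mu$ is a finite Borel measure with $\supp\mu\in(0,1)$. Consequently the subordinated function $f(t,x):=\intl_0^\infty T_\tau f_0(x)\,p^\mu(t,\tau)\,d\tau$ of~\eqref{Appl:3:eq:Tt-subordinated} is strongly continuous in $t$ and solves the abstract Cauchy problem $\mathcal{D}^\mu f=Lf$, $f(0)=f_0$, which is precisely the Cauchy problem for the distributed order time-fractional diffusion equation~\eqref{eq:fracDiff-CP}.

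For the identification in step (a), I would iterate the single operator~\eqref{1:eq:F^ABC+} $n$ times with time step $\tau/n$. Each application contributes a Gaussian kernel $p_A(\tau/n,x_{j-1},x_j)$ from~\eqref{1:eq:3:p_A} together with the scalar factors $\exp(-(\tau/n)C(x_{j-1}))$, $\exp(\tfrac12 A^{-1}(x_{j-1})B(x_{j-1})\cdot(x_{j-1}-x_j))$ and $\exp(-(\tau/n)\tfrac14|A^{-1/2}(x_{j-1})B(x_{j-1})|^2)$; collecting the $n$ factors into the two sums over $j$ reproduces exactly the $nd$-fold integrand of~\eqref{FF:fracCP}, and integrating $F^n(\tau/n)f_0(x_0)$ against $p^\mu(t,\tau)\,d\tau$ then yields $f_n$. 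This is the same bookkeeping already carried out for the killed semigroup in~\eqref{CDP:3:FF} of Corollary~\ref{cor:example:CDP-BGS}, only now performed on the full space $\cRd$ without the cut-off factors $\phi_{s(t)}$. I expect this routine but slightly tedious matching of factors to be the main (though minor) obstacle, the only care being to keep the sign convention in the drift exponent consistent with~\eqref{1:eq:F^ABC+}.

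Finally I would conclude convergence by Theorem~\ref{Appl:3:thm}. The family $(F(t))_{t\geq0}$ of~\eqref{1:eq:F^ABC+} is strongly continuous and consists of contractions by Corollary~\ref{cor:example:CDP-BGS}; strong continuity implies that it is strongly Borel measurable, so all hypotheses of Theorem~\ref{Appl:3:thm} are met. That theorem then gives $\|f_n(t)-f(t)\|_X\to0$ locally uniformly in $t\geq0$. Since $X=C_0(\cRd)$ carries the supremum norm, this convergence is automatically uniform with respect to $x_0\in\cRd$, and local uniformity in $t$ covers every interval $(0,t^*]$. Therefore the limit in~\eqref{FF:fracCP} exists, equals $f(t,x_0)$, and by the solution step solves the Cauchy problem for~\eqref{eq:fracDiff-CP} with initial condition $f_0$, which is the asserted statement.
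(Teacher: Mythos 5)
Your proposal is correct and follows essentially the same route as the paper: the author likewise identifies the iterated integral in \eqref{FF:fracCP} with $f_n(t,x_0)=\intl_0^\infty F^n(\tau/n)f_0(x_0)\,p^\mu(t,\tau)\,d\tau$ for the family \eqref{1:eq:F^ABC+}, invokes Proposition~\ref{Appl:3:prop} to see that $f(t,x)=\intl_0^\infty T_\tau f_0(x)\,p^\mu(t,\tau)\,d\tau$ solves \eqref{eq:fracDiff-CP}, and applies Theorem~\ref{Appl:3:thm} (contractivity and strong continuity of $(F(t))_{t\geq0}$ giving strong Borel measurability) to obtain the convergence uniformly in $x_0$ and locally uniformly in $t$. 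No gaps.
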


In particular, consider  a $1/2$-stable inverse subordinator $(E^{1/2}_t)_{t\geq0}$. Its marginal probability density is known explicitly (see Cor.~3.1 in \cite{MR2074812} and the discussion after  Lemma~3 in \cite{MR2726092})
\begin{align*}
p^{1/2}(t,\tau)=\frac{1}{\sqrt{\pi t}}e^{-\frac{\tau^2}{4t}}.
\end{align*}
Therefore, in the case when $\mathcal{D}^\mu $ is the Caputo derivative of $1/2$-th order, the  function $f(t,x_0)$,
represented (uniformly with respect to $x_0\in\cRd$ and with respect to $t\in(0,t^*]$ for all $t^*>0$) by the following Feynman formula~\eqref{FF:fracCP-1/2},  solves   the Cauchy problem for the   time-fractional diffusion equation~\eqref{eq:fracDiff-CP}  with initial condition $f_0$:
\begin{align}\label{FF:fracCP-1/2}
&f(t,x_0)\nonumber\\
&=\liml_{n\to\infty}\intl_{0}^{\infty}
\intl_{\cR^{nd}} e^{-\frac{\tau}{n}\suml_{j=1}^n \left(C(x_{j-1})+\frac14\left|A^{-1/2}(x_{j-1})B(x_{j-1})\right|^2   \right) } e^{\frac12\suml_{j=1}^n   A^{-1}(x_{j-1})B(x_{j-1})\cdot(x_{j-1}-x_{j})} 
\nonumber\\
&
\quad\quad\quad\quad\quad\quad \times
  p_A(\tau/n, x_0,x_1) \cdots p_A(\tau/n, x_{n-1},x_n) f_0(x_n)\frac{1}{\sqrt{\pi t}}e^{-\frac{\tau^2}{4t}}dx_1\ldots dx_{n}d\tau.
\end{align}

Analogous results hold true for distributed order time-fractional Fokker--Planck--Kolmogorov equations with non-local operators $L$ considered  in Subsection~\ref{Subsection:CDP-nonLoc} and in \cite{ChApprSubSem,MR2999096}.

\subsection{Feynman formula solving the Cauchy--Dirichlet problem  for a class of time-fractional diffusion equations}\label{Appl:3:example-CDP}

We keep on working in the situation of Subsection~\ref{Appl:3:example}. Let additionally   $(T_t)_{t\geq0}$ be   doubly Feller, $A$, $B$, $C$ be of the class $C^{2,\alpha}_b(\cRd)$ for some $\alpha\in(0,1)$. Let $G\subset\cRd$ be a  bounded domain with the boundary $\pd G$ of the class $C^{4,\alpha}$ for  some $\alpha\in(0,1)$.  Then  we have by Theorem~\ref{CDP:2:thm} and Remark~\ref{CDP:2:rem:L_ABC-i}  for the corresponding  semigroup $(T^o_t)_{t\geq0}$ on $Y$ 
\begin{align*}
T^o_t\varphi(x) = \liml_{n\to\infty}\big(\big(F_o(t/n)\big)^n\varphi\big)(x) \quad \mbox{for all } \,\ffi\in C_0(G),
\end{align*}
uniformly in $x \in G$ and locally uniformly in $t \in [0,\infty)$. Here the family $(F_o(t))_{t\geq0}$ has been constructed from the family $(F(t))_{t\geq0}$ given in~\eqref{1:eq:F^ABC+} by the formula~\eqref{1:eq:4:Fo}. 
Let  $f_0\in\Dom(L_o)$. Due to Proposition~\ref{Appl:3:prop} and Remark~\ref{rem:ACP-CDP}, the function 
\begin{align}\label{eq:f-fractCDP}
f(t,x):=\intl_0^\infty  T^o_\tau f_0(x)\, p^{\mu}(t,\tau)\,d\tau
\end{align}
 solves the following Cauchy--Dirichlet problem for the distributed order time-fractional diffusion equation
\begin{align}\label{eq:fractCDP}
&\mathcal{D}^\mu f(t,x)=-C(x)f(t,x) - B(x)\cdot\nabla  f(t,x)+ \tr(A(x)\Hess f(t,x)),\quad t>0,\,\,x\in G,\\
&
f(0,x)=f_0(x),\quad x\in G,\nonumber\\
&
f(t,x)=0,\quad t>0,\,\,x\in\pd G. \nonumber
\end{align}
 Due to Theorem~\ref{Appl:3:thm}, the following functions $f_n(t,x)$ approximate the solution  $f(t,x)$ in supremum norm with respect to $x\in G$  uniformly with respect to  $t\in(0,t^*]$ for all $t^*>0$:
\begin{align*}
&f_n(t,x_0):=\intl_0^\infty  F_o^n(\tau/n) f_0(x_0)\, p^{\mu}(t,\tau)\,d\tau\\
&
=\intl_{0}^{\infty}
\intl_{\cR^{nd}} e^{-\frac{\tau}{n}\suml_{j=1}^n \left(C(x_{j-1})+\frac14\left|A^{-1/2}(x_{j-1})B(x_{j-1})\right|^2   \right) } e^{\frac12\suml_{j=1}^n   A^{-1}(x_{j-1})B(x_{j-1})\cdot(x_{j-1}-x_{j})}\mathcal{E}(f_0)(x_n) 
\nonumber\\
&
 \times
\bigg(\prod^{n}_{j=1}\phi_{s(\tau/n)}(x_{j-1})\bigg)  p_A(\tau/n, x_0,x_1) \cdots p_A(\tau/n, x_{n-1},x_n) p^{\mu}(t,\tau)\,dx_1\ldots dx_{n}d\tau.
\end{align*}
For each $\tau\in[0,\infty)$, each $x_0\in G$ and each $n\in\Nat$, let us define $\Theta^{f_0}_n(\tau,x_0)$ by
\begin{align*}
\Theta^{f_0}_n(\tau,x_0):=\intl_{G^n}& e^{-\frac{\tau}{n}\suml_{j=1}^n \left(C(x_{j-1})+\frac14\left|A^{-1/2}(x_{j-1})B(x_{j-1})\right|^2   \right) } e^{\frac12\suml_{j=1}^n   A^{-1}(x_{j-1})B(x_{j-1})\cdot(x_{j-1}-x_{j})}\\
&
\times f_0(x_n)   p_A(\tau/n, x_0,x_1) \cdots p_A(\tau/n, x_{n-1},x_n) dx_1\ldots dx_{n}.
\end{align*}
Then we have $\sup_{x_0\in G}|\Theta^{f_0}_n(\tau,x_0)|\leq\|f_0\|_Y$ for all $\tau\in[0,\infty)$ and $n\in\Nat$. Let us show that the functions 
$$
g_n(t,x_0):=\intl_0^\infty \Theta^{f_0}_n(\tau,x_0) p^{\mu}(t,\tau)d\tau
$$
approximate the function $f(t,x_0)$  in formula~\eqref{eq:f-fractCDP}  solving the Cauchy--Dirichlet problem~\eqref{eq:fractCDP} as $n\to\infty$  locally uniformly with respect to $x_0\in G$ and uniformly  with respect to $t\in(0,t^*]$ for all $t^*>0$. So, fix any $\eps>0$, $T>0$ and a compact $K\subset G$. Let $x_0\in K$ and $t\in[0,T]$. Due to Lemma~\ref{Appl:3:le}, there exists $R_{T,\eps}>0$ such that 
$
\intl_{R_{T,\eps}}^\infty p^{\mu}(t,\tau)d\tau<\eps
$
for all $t\in[0,T]$. Then, similarly to the proof of Theorem~\ref{Appl:3:thm},
\begin{align*}
&\left|g_n(t,x_0)-f(t,x_0)\right|\leq \left|g_n(t,x_0)-f_n(t,x_0)\right|+\left|f_n(t,x_0)-f(t,x_0)\right|\\
&\leq \intl_0^{R_{T,\eps}}\|\Theta^{f_0}_n(\tau,\cdot)- F^n_o(\tau/n) f_0\|_{C(K)} p^{\mu}(t,\tau)d\tau+2\eps\|f_0\|_Y+\|f_n(t,\cdot)-f(t,\cdot)\|_Y\\
&
\leq \supl_{\tau\in[0,R_{T,\eps}]}\|\Theta^{f_0}_n(\tau,\cdot)- F^n_o(\tau/n) f_0\|_{C(K)}+2\eps\|f_0\|_Y+\|f_n(t,\cdot)-f(t,\cdot)\|_Y\\
&\to 2\eps\|f_0\|_Y\quad\text{ as }\, n\to\infty,
\end{align*} 
since the convergence of $\Theta^{f_0}_n(\tau,x_0)$ to $F^n_o(\tau/n) f_0(x_0)$ is locally uniform with respect to $x_0\in G$ and uniform with respect  to $t\in(0,t^*]$ for all $t^*>0$ (cf. Corollary~\ref{cor:example:CDP-BGS}) and due to Theorem~\ref{Appl:3:thm}.  Therefore, since $\eps>0$ is arbitrary, the following statement is proved.
\begin{proposition}
Let $(T_t)_{t\geq0}$ be a doubly Feller semigroup on $X=C_0(\cRd)$ whose generator $(L,\Dom(L))$ is given for all $\ffi\in C^2_0(\cRd)$ by the formula~\eqref{1:3:eq:L_ABC}. Let the coefficients $A$, $B$, $C$ in~\eqref{1:3:eq:L_ABC} be  of the class $C^{2,\alpha}_b(\cRd)$ for some $\alpha\in(0,1)$. Let there exist $a_0$, $A_0\in\cR$ such that \eqref{eq:CP:uniformEllipt} holds. Assume that the coefficients $A$, $B$, $C$  are such that the set $C^{2,\alpha}_c(\cRd)$ is a core for the generator $L$ in $X$.  Let $G\subset\cRd$ be a bounded domain with the boundary $\pd G$ of the class $C^{4,\alpha}$. Let $(T^o_t)_{t\geq0}$ be the corresponding Feller semigroup on $Y=C_0(G)$ with the generator $(L_o,\Dom(L_o))$. Let $f_0\in\Dom(L_o)$. Let $\mu$ be a finite Borel measure with $\supp\mu\in(0,1)$.  Then the function $f(t,x_0)$, which is given for all $t>0$ and all $x_0\in G$ by the Feynman formula~\eqref{FF:fracCDP} below, solves the Cauchy--Dirichlet problem~\eqref{eq:fractCDP}. And the convergence in the Feynman formula~\eqref{FF:fracCDP} is locally uniform with respect to $x_0\in G$ and uniform with respect to $t\in(0,t^*]$ for all $t^*>0$:
 \begin{align}\label{FF:fracCDP}
&f(t,x_0)\nonumber\\
&=\liml_{n\to\infty}\intl_{0}^{\infty}
\intl_{G^{n}} e^{-\frac{\tau}{n}\suml_{j=1}^n \left(C(x_{j-1})+\frac14\left|A^{-1/2}(x_{j-1})B(x_{j-1})\right|^2   \right) } e^{\frac12\suml_{j=1}^n   A^{-1}(x_{j-1})B(x_{j-1})\cdot(x_{j-1}-x_{j})} 
\nonumber\\
&
\quad\quad\quad\quad\quad\quad \times
  p_A(\tau/n, x_0,x_1) \cdots p_A(\tau/n, x_{n-1},x_n) f_0(x_n)p^{\mu}(t,\tau)dx_1\ldots dx_{n}d\tau,
\end{align}
where $p_A$ is given by~\eqref{1:eq:3:p_A}.
\end{proposition}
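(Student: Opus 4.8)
The plan is to identify the right-hand side of the Feynman formula~\eqref{FF:fracCDP} as the limit of a \emph{subordinated} Chernoff approximation for the killed semigroup $(T^o_t)_{t\geq0}$, and to push that convergence through the outer integral against the density $p^\mu$. First I would pin down the object to be represented. Since $(T^o_t)_{t\geq0}$ is a strongly continuous contraction (hence uniformly bounded) semigroup on $Y$ and $f_0\in\Dom(L_o)$, Proposition~\ref{Appl:3:prop}, applied with $X$ replaced by $Y$ and $L$ by $L_o$, shows that the function $f(t,x)$ defined in~\eqref{eq:f-fractCDP} satisfies $\mathcal{D}^\mu f=L_o f$ with $f(0)=f_0$ in $Y$. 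Reading this abstract equation as the Cauchy--Dirichlet problem~\eqref{eq:fractCDP} via Remark~\ref{rem:ACP-CDP} (the operator here being local, so the boundary condition is the literal Dirichlet condition on $\pd G$), we see that this $f$ is precisely the solution whose Feynman representation we must establish. It therefore suffices to prove that the multiple integrals on the right of~\eqref{FF:fracCDP} converge to this $f$ with the asserted double uniformity.

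Next I would bring in the Chernoff approximation already constructed earlier. By Corollary~\ref{cor:example:CDP-BGS} (equivalently, Theorem~\ref{CDP:2:thm} combined with Remark~\ref{CDP:2:rem:L_ABC-i}), the family $(F_o(t))_{t\geq0}$ built from the operators~\eqref{1:eq:F^ABC+} through~\eqref{1:eq:4:Fo} is a strongly continuous family of contractions on $Y$ that is Chernoff equivalent to $(T^o_t)_{t\geq0}$; in particular it is strongly Borel measurable. Theorem~\ref{Appl:3:thm}, applied on the Banach space $Y$, then yields that $f_n(t,x_0):=\intl_0^\infty F^n_o(\tau/n)f_0(x_0)\,p^\mu(t,\tau)\,d\tau$ converges to $f(t,x_0)$ in $\|\cdot\|_Y$, locally uniformly in $t$. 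The genuinely new step is to replace the iterated integral defining $F^n_o(\tau/n)f_0$ — which carries the cut-offs $\phi_{s(\tau/n)}$ and the extension $\mathcal{E}(f_0)$ integrated over all of $\cRd$ — by the clean $G^n$-integral $\Theta^{f_0}_n(\tau,x_0)$, and to show that the subordinated quantity $g_n(t,x_0):=\intl_0^\infty\Theta^{f_0}_n(\tau,x_0)\,p^\mu(t,\tau)\,d\tau$ shares the limit $f$.

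To close this I would split $|g_n-f|\le|g_n-f_n|+|f_n-f|$; the second term is handled by the previous paragraph. For the first term I would cut the $\tau$-integral at a level $R_{T,\eps}$ furnished by Lemma~\ref{Appl:3:le}, making $\intl_{R_{T,\eps}}^\infty p^\mu(t,\tau)\,d\tau<\eps$ uniformly in $t\in[0,T]$. On this tail the crude bounds $\supl_{x_0\in G}|\Theta^{f_0}_n(\tau,x_0)|\le\|f_0\|_Y$ and $\|F^n_o\|\le1$ give a contribution $\le 2\eps\|f_0\|_Y$. On the compact window $\tau\in[0,R_{T,\eps}]$ I would use that $\Theta^{f_0}_n(\tau,\cdot)$ and $F^n_o(\tau/n)f_0$ have a common limit, locally uniformly in $x_0$ and uniformly in $\tau$ on compacts — this is exactly the content of the $I^\ffi_n,J^\ffi_n\to0$ estimates underlying Corollary~\ref{cor:example:CDP-BGS} — so that $\supl_{\tau\in[0,R_{T,\eps}]}\|\Theta^{f_0}_n(\tau,\cdot)-F^n_o(\tau/n)f_0\|_{C(K)}\to0$ for any compact $K\sbs G$. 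Letting $n\to\infty$ and then $\eps\to0$ finishes the estimate with the advertised uniformity.

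The hard part will be carrying the two uniformities simultaneously: convergence must be uniform in $t$ on $(0,t^*]$ and locally uniform in $x_0\in G$, all in the presence of the outer $\tau$-integral. The mechanism reconciling them is the interplay between the local-in-time uniformity of the Chernoff limit (which is uniform in $\tau$ across $[0,R_{T,\eps}]$) and the uniform-in-$t$ tail bound of Lemma~\ref{Appl:3:le}. The delicate point is that $\phi_{s(\tau/n)}$ and $\mathcal{E}(f_0)$ agree with $f_0$ only on the exhausting subdomains $G_{s(\tau/n)}$, so one must verify that the discrepancy terms vanish uniformly over the \emph{entire} compact $\tau$-window rather than at a single $\tau$; this is precisely where the Gaussian fall-off of the kernel $p_A$ from~\eqref{1:eq:3:p_A} and the $C^{4,\alpha}$ regularity of $\pd G$ (via the construction of the $G_{s(t)}$ and the Lipschitz control of $f_0$ near $\pd G$) are used.
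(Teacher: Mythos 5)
Your proposal is correct and follows essentially the same route as the paper: identify $f(t,x)=\int_0^\infty T^o_\tau f_0(x)\,p^\mu(t,\tau)\,d\tau$ as the solution via Proposition~\ref{Appl:3:prop} and Remark~\ref{rem:ACP-CDP}, apply Theorem~\ref{Appl:3:thm} on $Y$ to the Chernoff family $(F_o(t))_{t\geq0}$, and then pass from $F_o^n(\tau/n)f_0$ to the clean $G^n$-integral $\Theta^{f_0}_n$ by the same splitting $|g_n-f|\leq|g_n-f_n|+|f_n-f|$ with the tail cut at $R_{T,\eps}$ from Lemma~\ref{Appl:3:le} and the uniform-in-$\tau$ convergence on the compact window supplied by Corollary~\ref{cor:example:CDP-BGS}. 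No substantive differences from the paper's argument.
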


Analogous results hold true for distributed order time-fractional Fokker--Planck--Kolmogorov equations with non-local operators $L$ considered  in Subsection~\ref{Subsection:CDP-nonLoc}. Other representations for solutions of some distributed order time-fractional Fokker--Planck--Kolmogorov equations in bounded domains  can be found, e.g., in  \cite{MR2776466,MR2921690}.  

\section*{Acknowledgements}
I would like to thank Christian Bender for the encouragement and support of my researches;
 Alessandra Lunardi for her remarks (they are presented in Remark~\ref{rem:Lunardi}) on properties of the Laplace operator with Dirichlet boundary conditions;    Krzysztof Bogdan for communicating me the reference \cite{BLM} and Panki Kim for  remarks on killed Feller processes.

%

\begin{thebibliography}{10}

\bibitem{MR3413862}
B.~Baeumer, M.~Kov\'acs, M.~M. Meerschaert, R.~L. Schilling, and P.~Straka.
\newblock Reflected spectrally negative stable processes and their governing
  equations.
\newblock {\em Trans. Amer. Math. Soc.}, 368(1):227--248, 2016.

\bibitem{BLM}
B.~Baeumer, T.~Luks, and M.~M. Meerschaert.
\newblock Space-time fractional {D}irichlet problems.
\newblock {\em Preprint}, 2016.

\bibitem{MR1874479}
B.~Baeumer and M.~M. Meerschaert.
\newblock Stochastic solutions for fractional {C}auchy problems.
\newblock {\em Fract. Calc. Appl. Anal.}, 4(4):481--500, 2001.

\bibitem{MR2491905}
B.~Baeumer, M.~M. Meerschaert, and E.~Nane.
\newblock Brownian subordinators and fractional {C}auchy problems.
\newblock {\em Trans. Amer. Math. Soc.}, 361(7):3915--3930, 2009.

\bibitem{MR2860750}
B.~Baur, F.~Conrad, and M.~Grothaus.
\newblock Smooth contractive embeddings and application to {F}eynman formula
  for parabolic equations on smooth bounded domains.
\newblock {\em Comm. Statist. Theory Methods}, 40(19-20):3452--3464, 2011.

\bibitem{MR2006232}
K.~Bogdan, K.~Burdzy, and Z.-Q. Chen.
\newblock Censored stable processes.
\newblock {\em Probab. Theory Related Fields}, 127(1):89--152, 2003.

\bibitem{MR2863557}
B.~B{\"o}ttcher, Y.~A. Butko, R.~L. Schilling, and O.~G. Smolyanov.
\newblock Feynman formulas and path integrals for some evolution semigroups
  related to {$\tau$}-quantization.
\newblock {\em Russ. J. Math. Phys.}, 18(4):387--399, 2011.

\bibitem{MR3156646}
B.~B{\"o}ttcher, R.~Schilling, and J.~Wang.
\newblock {\em L\'evy matters. {III}}, volume 2099 of {\em Lecture Notes in
  Mathematics}.
\newblock Springer, Cham, 2013.
\newblock L{\'e}vy-type processes: construction, approximation and sample path
  properties, With a short biography of Paul L{\'e}vy by Jean Jacod, L{\'e}vy
  Matters.

\bibitem{MR2502474}
B.~B{\"o}ttcher and R.~L. Schilling.
\newblock Approximation of {F}eller processes by {M}arkov chains with {L}\'evy
  increments.
\newblock {\em Stoch. Dyn.}, 9(1):71--80, 2009.

\bibitem{MR2133965}
Y.~A. Butko.
\newblock Representations of the solution of the {C}auchy-{D}irichlet problem
  for the heat equation in a domain of a compact {R}iemannian manifold by
  functional integrals.
\newblock {\em Russ. J. Math. Phys.}, 11(2):121--129, 2004.

\bibitem{MR2314128}
Y.~A. Butko.
\newblock Functional integrals corresponding to a solution of the
  {C}auchy-{D}i\-rich\-let problem for the heat equation in a domain of a
  {R}iemannian manifold.
\newblock {\em Fundam. Prikl. Mat.}, 12(6):3--15, 2006.

\bibitem{MR2359387}
Y.~A. Butko.
\newblock Functional integrals over {S}molyanov surface measures for
  evolutionary equations on a {R}iemannian manifold.
\newblock In {\em Quantum probability and infinite dimensional analysis},
  volume~20 of {\em QP--PQ: Quantum probability and infinite dimensional
  analysis}, pages 145--155. World Sci. Publ., Hackensack, NJ, 2007.

\bibitem{MR2423533}
Y.~A. Butko.
\newblock Feynman formulas and functional integrals for diffusion with drift in
  a domain on a manifold.
\newblock {\em Mat. Zametki}, 83(3):333--349, 2008.

\bibitem{technomag}
Y.~A. Butko.
\newblock Feynman formulae for evolution semigroups.
\newblock {\em Scientific periodical of the Bauman MSTU `Science and
  Education''}, 3:95--132, 2014.

\bibitem{MR3381153}
Y.~A. Butko.
\newblock Description of quantum and classical dynamics via {F}eynman formulae.
\newblock In {\em Mathematical results in quantum mechanics}, pages 227--233.
  World Sci. Publ., Hackensack, NJ, 2015.

\bibitem{ChApprSubSem}
Y.~A. Butko.
\newblock Chernoff approximations for subordinate semigroups.
\newblock {\em Stochastics and Dynamics}, pages 1850021, 19, 2017.

\bibitem{MR2483984}
Y.~A. Butko, M.~Grothaus, and O.~G. Smolyanov.
\newblock Feynman's formula for a second-order parabolic equation in a domain.
\newblock {\em Dokl. Akad. Nauk}, 421(6):727--732, 2008.

\bibitem{MR2729591}
Y.~A. Butko, M.~Grothaus, and O.~G. Smolyanov.
\newblock Lagrangian {F}eynman formulas for second-order parabolic equations in
  bounded and unbounded domains.
\newblock {\em Infin. Dimens. Anal. Quantum Probab. Relat. Top.},
  13(3):377--392, 2010.

\bibitem{MR3455669}
Y.~A. Butko, M.~Grothaus, and O.~G. Smolyanov.
\newblock Feynman formulae and phase space {F}eynman path integrals for
  tau-quantization of some {L}\'evy-{K}hintchine type {H}amilton functions.
\newblock {\em J. Math. Phys.}, 57(2):023508, 22, 2016.

\bibitem{MR2810757}
Y.~A. Butko, R.~L. Schilling, and O.~G. Smolyanov.
\newblock Hamiltonian {F}eynman-{K}ac and {F}eynman formulae for dynamics of
  particles with position-dependent mass.
\newblock {\em Internat. J. Theoret. Phys.}, 50(7):2009--2018, 2011.

\bibitem{MR2999096}
Y.~A. Butko, R.~L. Schilling, and O.~G. Smolyanov.
\newblock Lagrangian and {H}amiltonian {F}eynman formulae for some {F}eller
  semigroups and their perturbations.
\newblock {\em Infin. Dimens. Anal. Quantum Probab. Relat. Top.}, 15(3):26,
  2012.

\bibitem{MR2759261}
Y.~A. Butko, O.~G. Smolyanov, and R.~L. Schilling.
\newblock Feynman formulas for {F}eller semigroups.
\newblock {\em Dokl. Akad. Nauk}, 434(1):7--11, 2010.

\bibitem{Jakobsen}
L.~Chen, E.~R. Jakobsen, and A.~Naess.
\newblock On numerical density approximations of solutions of sdes with
  unbounded coefficients.
\newblock {\em Preprint}, 2016.

\bibitem{MR2921690}
Z.-Q. Chen, M.~M. Meerschaert, and E.~Nane.
\newblock Space-time fractional diffusion on bounded domains.
\newblock {\em J. Math. Anal. Appl.}, 393(2):479--488, 2012.

\bibitem{MR1473631}
Z.-Q. Chen and R.~Song.
\newblock Intrinsic ultracontractivity and conditional gauge for symmetric
  stable processes.
\newblock {\em J. Funct. Anal.}, 150(1):204--239, 1997.

\bibitem{MR0231238}
P.~R. Chernoff.
\newblock Note on product formulas for operator semigroups.
\newblock {\em J. Functional Analysis}, 2:238--242, 1968.

\bibitem{MR0417851}
P.~R. Chernoff.
\newblock {\em Product formulas, nonlinear semigroups, and addition of
  unbounded operators}.
\newblock American Mathematical Society, Providence, R. I., 1974.
\newblock Memoirs of the American Mathematical Society, No. 140.

\bibitem{MR2726092}
M.~D'Ovidio.
\newblock Explicit solutions to fractional diffusion equations via generalized
  gamma convolution.
\newblock {\em Electron. Commun. Probab.}, 15:457--474, 2010.

\bibitem{MR3318251}
M.~Felsinger, M.~Kassmann, and P.~Voigt.
\newblock The {D}irichlet problem for nonlocal operators.
\newblock {\em Math. Z.}, 279(3-4):779--809, 2015.

\bibitem{MR2477291}
M.~Gad{\`e}l{\cprime}ya and O.~G. Smolyanov.
\newblock Feynman formulas for particles with position-dependent mass.
\newblock {\em Dokl. Akad. Nauk}, 418(6):727--730, 2008.

\bibitem{MR3400947}
R.~Garra, E.~Orsingher, and F.~Polito.
\newblock Fractional diffusions with time-varying coefficients.
\newblock {\em J. Math. Phys.}, 56(9):093301, 17, 2015.

\bibitem{MR0260036}
J.~E. Gillis and G.~H. Weiss.
\newblock Expected number of distinct sites visited by a random walk with an
  infinite variance.
\newblock {\em J. Mathematical Phys.}, 11:1307--1312, 1970.

\bibitem{MR1656314}
R.~Gorenflo and F.~Mainardi.
\newblock Random walk models for space-fractional diffusion processes.
\newblock {\em Fract. Calc. Appl. Anal.}, 1(2):167--191, 1998.

\bibitem{MR2190085}
J.~Gough, O.~O. Obrezkov, and O.~G. Smolyanov.
\newblock Randomized {H}amiltonian {F}eynman integrals and stochastic
  {S}chr\"odinger-{I}t\^o equations.
\newblock {\em Izv. Ross. Akad. Nauk Ser. Mat.}, 69(6):3--20, 2005.

\bibitem{MR2736349}
M.~Hahn, K.~Kobayashi, and S.~Umarov.
\newblock Fokker-{P}lanck-{K}ol\-mo\-go\-rov equations associated with
  time-changed fractional {B}rownian motion.
\newblock {\em Proc. Amer. Math. Soc.}, 139(2):691--705, 2011.

\bibitem{MR2886388}
M.~Hahn, K.~Kobayashi, and S.~Umarov.
\newblock S{DE}s driven by a time-changed {L}\'evy process and their associated
  time-fractional order pseu\-do-differential equations.
\newblock {\em Journal of Theoretical Probability}, 25(1):262--279, 2012.

\bibitem{MR2782245}
M.~Hahn and S.~Umarov.
\newblock Fractional {F}okker-{P}lanck-{K}ol\-mo\-go\-rov type equations and
  their associated stochastic differential equations.
\newblock {\em Fract. Calc. Appl. Anal.}, 14(1):56--79, 2011.

\bibitem{MR1383011}
W.~Hoh and N.~Jacob.
\newblock On the {D}irichlet problem for pseudodifferential operators
  generating {F}eller semigroups.
\newblock {\em J. Funct. Anal.}, 137(1):19--48, 1996.

\bibitem{MR1873235}
N.~Jacob.
\newblock {\em Pseudo differential operators and {M}arkov processes. {V}ol.
  {I}}.
\newblock Imperial College Press, London, 2001.
\newblock Fourier analysis and semigroups.

\bibitem{MR2376152}
A.~N. Kochubei.
\newblock Distributed order calculus and equations of ultraslow diffusion.
\newblock {\em J. Math. Anal. Appl.}, 340(1):252--281, 2008.

\bibitem{MR2547389}
J.~Kupsch and O.~G. Smolyanov.
\newblock Generalized {W}iener-{S}egal-{F}ock representations and {F}eynman
  formulas.
\newblock {\em Dokl. Akad. Nauk}, 425(1):15--19, 2009.

\bibitem{MR3012216}
A.~Lunardi.
\newblock {\em Analytic semigroups and optimal regularity in parabolic
  problems}.
\newblock Modern Birkh\"auser Classics. Birkh\"auser/Springer Basel AG, Basel,
  1995.

\bibitem{MR2463064}
F.~Mainardi, A.~Mura, G.~Pagnini, and R.~Gorenflo.
\newblock Time-fractional diffusion of distributed order.
\newblock {\em J. Vib. Control}, 14(9-10):1267--1290, 2008.

\bibitem{MR2776466}
M.~M. Meerschaert, E.~Nane, and P.~Vellaisamy.
\newblock Distributed-order fractional diffusions on bounded domains.
\newblock {\em J. Math. Anal. Appl.}, 379(1):216--228, 2011.

\bibitem{MR2074812}
M.~M. Meerschaert and H.-P. Scheffler.
\newblock Limit theorems for continuous-time random walks with infinite mean
  waiting times.
\newblock {\em J. Appl. Probab.}, 41(3):623--638, 2004.

\bibitem{MR2251542}
M.~M. Meerschaert and H.-P. Scheffler.
\newblock Stochastic model for ultraslow diffusion.
\newblock {\em Stochastic Process. Appl.}, 116(9):1215--1235, 2006.

\bibitem{MR2442372}
M.~M. Meerschaert and H.-P. Scheffler.
\newblock Triangular array limits for continuous time random walks.
\newblock {\em Sto\-chas\-tic Process. Appl.}, 118(9):1606--1633, 2008.

\bibitem{MR2884383}
M.~M. Meerschaert and A.~Sikorskii.
\newblock {\em Stochastic models for fractional calculus}, volume~43 of {\em De
  Gruyter Studies in Mathematics}.
\newblock Walter de Gruyter \& Co., Berlin, 2012.

\bibitem{MR3049524}
M.~M. Meerschaert and P.~Straka.
\newblock Inverse stable subordinators.
\newblock {\em Math. Model. Nat. Phenom.}, 8(2):1--16, 2013.

\bibitem{MR1809268}
R.~Metzler and J.~Klafter.
\newblock The random walk's guide to anomalous diffusion: a fractional dynamics
  approach.
\newblock {\em Phys. Rep.}, 339(1):77, 2000.

\bibitem{MR2090004}
R.~Metzler and J.~Klafter.
\newblock The restaurant at the end of the random walk: recent developments in
  the description of anomalous transport by fractional dynamics.
\newblock {\em J. Phys. A}, 37(31):R161--R208, 2004.

\bibitem{MR3168478}
J.~B. Mijena and E.~Nane.
\newblock Strong analytic solutions of fractional {C}auchy problems.
\newblock {\em Proc. Amer. Math. Soc.}, 142(5):1717--1731, 2014.

\bibitem{MR1219954}
K.~S. Miller and B.~Ross.
\newblock {\em An introduction to the fractional calculus and fractional
  differential equations}.
\newblock A Wiley-Interscience Publication. John Wiley \& Sons, Inc., New York,
  1993.

\bibitem{MR757002}
E.~W. Montroll and M.~F. Shlesinger.
\newblock On the wonderful world of random walks.
\newblock In {\em Nonequilibrium phenomena, {II}}, Stud. Statist. Mech., XI,
  pages 1--121. North-Holland, Amsterdam, 1984.

\bibitem{MR2588003}
A.~Mura, M.~S. Taqqu, and F.~Mainardi.
\newblock Non-{M}arkovian diffusion equations and processes: analysis and
  simulations.
\newblock {\em Phys. A}, 387(21):5033--5064, 2008.

\bibitem{MR1991497}
O.~O. Obrezkov.
\newblock The proof of the {F}eynman-{K}ac formula for heat equation on a
  compact {R}iemannian manifold.
\newblock {\em Infin. Dimens. Anal. Quantum Probab. Relat. Top.},
  6(2):311--320, 2003.

\bibitem{MR2314121}
O.~O. Obrezkov.
\newblock Representation of a solution of a stochastic {S}chr\"odinger equation
  in the form of a {F}eynman integral.
\newblock {\em Fundam. Prikl. Mat.}, 12(5):135--152, 2006.

\bibitem{MR3527030}
O.~O. Obrezkov and O.~G. Smolyanov.
\newblock Representations of the solutions of {L}indblad equations with the
  help of randomized {F}eynman formulas.
\newblock {\em Dokl. Akad. Nauk}, 466(5):518--521, 2016.

\bibitem{MR2157588}
O.~O. Obrezkov, O.~G. Smolyanov, and A.~Trumen.
\newblock A generalized {C}hernoff theorem and a randomized {F}eynman formula.
\newblock {\em Dokl. Akad. Nauk}, 400(5):596--601, 2005.

\bibitem{SakSmoOrl2014}
Y.~N. Orlov, V.~Z. Sakbaev, and O.~G. Smolyanov.
\newblock Feynman formulas as a method of averaging random {H}amiltonians.
\newblock In {\em Proceedings of the Steklov Institute of Mathematics}, volume
  285, pages 222--232. 2014.

\bibitem{MR3588817}
Y.~N. Orlov, V.~Z. Sakbaev, and O.~G. Smolyanov.
\newblock Unbounded random operators and {F}eynman formulae.
\newblock {\em Ros\-si\u\i \-ska\-ya Akademiya Nauk. Izvestiya. Seriya
  Matematicheskaya}, 80(6):141--172, 2016.

\bibitem{MR2027298}
E.~Orsingher and L.~Beghin.
\newblock Time-fractional telegraph equations and telegraph processes with
  {B}rownian time.
\newblock {\em Probab. Theory Related Fields}, 128(1):141--160, 2004.

\bibitem{MR2489164}
E.~Orsingher and L.~Beghin.
\newblock Fractional diffusion equations and processes with randomly varying
  time.
\newblock {\em Ann. Probab.}, 37(1):206--249, 2009.

\bibitem{MR2965747}
E.~Orsingher and M.~D'Ovidio.
\newblock Probabilistic representation of fundamental solutions to
  {$\frac{\partial u}{\partial t}=\kappa_m\frac{\partial^m u}{\partial x^m}$}.
\newblock {\em Electron. Commun. Probab.}, 17:no. 1885, 12, 2012.

\bibitem{MR710486}
A.~Pazy.
\newblock {\em Semigroups of linear operators and applications to partial
  differential equations}, volume~44 of {\em Applied Mathematical Sciences}.
\newblock Springer-Verlag, New York, 1983.

\bibitem{MR2965550}
A.~S. Plyashechnik.
\newblock Feynman formula for {S}chr\"odinger-type equations with time- and
  space-dependent coefficients.
\newblock {\em Russ. J. Math. Phys.}, 19(3):340--359, 2012.

\bibitem{MR3096384}
A.~S. Plyashechnik.
\newblock Feynman formulas for second-order parabolic equations with variable
  coefficients.
\newblock {\em Russ. J. Math. Phys.}, 20(3):377--379, 2013.

\bibitem{MR2964432}
J.~Pr\"uss.
\newblock {\em Evolutionary integral equations and applications}.
\newblock Modern Birk\-h\"au\-ser Classics. Birkh\"auser/Springer Basel AG,
  Basel, 1993.
\newblock [2012] reprint of the 1993 edition.

\bibitem{MR3496753}
T.~S. Rat\cprime~yu and O.~G. Smolyanov.
\newblock Dynamics of particles with ani\-so\-tro\-pic mass that depends on
  time and position.
\newblock {\em Dokl. Akad. Nauk}, 465(4):407--410, 2015.

\bibitem{MR2965551}
I.~D. Remizov.
\newblock Solution of a {C}auchy problem for a diffusion equation in a
  {H}ilbert space by a {F}eynman formula.
\newblock {\em Russ. J. Math. Phys.}, 19(3):360--372, 2012.

\bibitem{Remizov-JFA}
I.~D. Remizov.
\newblock Quasi-{F}eynman formulas – a method of obtaining the evolution
  operator for the {S}chr\"odinger equation.
\newblock {\em Journal of Functional Analysis}, 270(12):4540--4557, 2016.

\bibitem{MR3461088}
J.~D. Rossi and E.~Topp.
\newblock Large solutions for a class of semilinear integro-differential
  equations with censored jumps.
\newblock {\em J. Differential Equations}, 260(9):6872--6899, 2016.

\bibitem{MR1604710}
A.~I. Saichev and G.~M. Zaslavsky.
\newblock Fractional kinetic equations: solutions and applications.
\newblock {\em Chaos}, 7(4):753--764, 1997.

\bibitem{MR2766564}
V.~Z. Sakbaev and O.~G. Smolyanov.
\newblock The dynamics of a quantum particle with discontinuous dependence of
  the mass on position.
\newblock {\em Dokl. Akad. Nauk}, 433(3):314--317, 2010.

\bibitem{MR1347689}
S.~G. Samko, A.~A. Kilbas, and O.~I. Marichev.
\newblock {\em Fractional integrals and derivatives}.
\newblock Gordon and Breach Science Publishers, Yverdon, 1993.
\newblock Theory and applications, Edited and with a foreword by S. M.
  Nikol\cprime ski\u\i , Translated from the 1987 Russian original, Revised by
  the authors.

\bibitem{MR2462100}
O.~G. Smolyanov and N.~N. Shamarov.
\newblock Feynman and {F}eynman-{K}ac formulas for evolution equations with the
  {V}la\-di\-mi\-rov operator.
\newblock {\em Dokl. Akad. Nauk}, 420(1):27--32, 2008.

\bibitem{MR2599557}
O.~G. Smolyanov and N.~N. Shamarov.
\newblock Feynman formulas and path integrals for evolution equations with the
  {V}la\-di\-mi\-rov operator.
\newblock {\em Tr. Mat. Inst. Steklova}, 265(Izbrannye Voprosy Matematicheskoy
  Fiziki i $p$-adicheskogo Analiza):229--240, 2009.

\bibitem{MR2681357}
O.~G. Smolyanov and N.~N. Shamarov.
\newblock Hamiltonian {F}eynman integrals for equations with the
  {V}la\-di\-mi\-rov operator.
\newblock {\em Dokl. Akad. Nauk}, 431(2):170--174, 2010.

\bibitem{MR2963683}
O.~G. Smolyanov and N.~N. Shamarov.
\newblock Hamiltonian {F}eynman formulas for equations containing the
  {V}la\-di\-mi\-rov operator with variable coefficients.
\newblock {\em Dokl. Akad. Nauk}, 440(5):597--602, 2011.

\bibitem{MR2865952}
O.~G. Smolyanov, N.~N. Shamarov, and M.~Kpekpassi.
\newblock Feynman-{K}ac and {F}eynman formulas for infinite-dimensional
  equations with the {V}la\-di\-mi\-rov operator.
\newblock {\em Dokl. Akad. Nauk}, 438(5):609--614, 2011.

\bibitem{MR2042203}
O.~G. Smolyanov and E.~T. Shavgulidze.
\newblock Feynman formulas for solutions of infinite-dimensional
  {S}chr\"odinger equations with polynomial potentials.
\newblock {\em Dokl. Akad. Nauk}, 390(3):321--324, 2003.

\bibitem{MR3154319}
O.~G. Smolyanov and D.~S. Tolstyga.
\newblock Feynman formulas for stochastic and quantum dynamics of particles in
  multidimensional domains.
\newblock {\em Dokl. Akad. Nauk}, 452(3):256--260, 2013.

\bibitem{MR1835462}
O.~G. Smolyanov and A.~Truman.
\newblock Feynman formulas for solutions of {S}chr\"odinger equations on
  compact {R}iemannian manifolds.
\newblock {\em Mat. Zametki}, 68(5):789--793, 2000.

\bibitem{MR2127488}
O.~G. Smolyanov and A.~Truman.
\newblock Hamiltonian {F}eynman formulas for the {S}chr\"odinger equation in
  bo\-un\-ded domains.
\newblock {\em Dokl. Akad. Nauk}, 399(3):310--314, 2004.

\bibitem{MR2114870}
O.~G. Smolyanov, H.~von Weizs{\"a}cker, and O.~Wittich.
\newblock Feynman formulas for the {C}auchy problem in domains with a boundary.
\newblock {\em Dokl. Akad. Nauk}, 395(5):596--600, 2004.

\bibitem{MR2449989}
O.~G. Smolyanov, H.~von Weizs{\"a}cker, and O.~Wittich.
\newblock Surface measures and initial-boundary value problems generated by
  diffusions with drift.
\newblock {\em Dokl. Akad. Nauk}, 415(6):737--741, 2007.

\bibitem{MR2208034}
S.~Umarov and R.~Gorenflo.
\newblock Cauchy and nonlocal multi-point problems for distributed order
  pseudo-differential equations. {I}.
\newblock {\em Z. Anal. Anwendungen}, 24(3):449--466, 2005.

\bibitem{MR2179231}
S.~Umarov and R.~Gorenflo.
\newblock On multi-dimensional random walk models approximating symmetric
  space-fractional diffusion processes.
\newblock {\em Fract. Calc. Appl. Anal.}, 8(1):73--88, 2005.

\bibitem{MR2536519}
H.~von Weizs{\"a}cker and O.~G. Smolyanov.
\newblock Feynman formulas generated by selfadjoint extensions of the {L}aplace
  operator.
\newblock {\em Dokl. Akad. Nauk}, 426(2):162--165, 2009.

\bibitem{MR2953994}
H.~von Weizs{\"a}cker, O.~G. Smolyanov, and D.~S. Tolstyga.
\newblock Feynman description of the one-dimensional dynamics of particles with
  piecewise-continuous dependence of mass on the coordinates.
\newblock {\em Dokl. Akad. Nauk}, 441(3):295--298, 2011.

\bibitem{MR1937584}
G.~M. Zaslavsky.
\newblock Chaos, fractional kinetics, and anomalous transport.
\newblock {\em Phys. Rep.}, 371(6):461--580, 2002.

\end{thebibliography}

\def\cprime{$'$}

\end{document}